\theoremstyle{plain}
\newtheorem{thm}{Theorem}[section]
\newtheorem{lem}[thm]{Lemma}
\newtheorem{prop}[thm]{Proposition}
\newtheorem{cor}[thm]{Corollary}
\theoremstyle{definition}
\newtheorem{defn}{Definition}[section]
\newtheorem{Conv}{Convention}[section]
\newtheorem*{Claim}{Claim}
\theoremstyle{remark}
\newtheorem{rem}{Remark}
\numberwithin{equation}{section}
\newcommand{\Z}{\mathbb{Z}}
\newcommand{\C}{\mathbb{C}}
\newcommand{\N}{\mathbb{N}}
\DeclareMathOperator{\Spec}{Spec}
\DeclareMathOperator{\val}{val}
\DeclareMathOperator{\Ri}{R}
\DeclareMathOperator{\Jac}{Jac}
\DeclareMathOperator{\Hi}{H}
\DeclareMathOperator{\rank}{rank}
\DeclareMathOperator{\ch}{ch}
\DeclareMathOperator{\Td}{Td}
\newcommand{\DR}{\mathrm{D\!R}}
\newcommand{\aj}{\bs{a}}
\newcommand{\cB}{\mathcal{B}}
\newcommand{\cM}{\mathcal{M}}
\newcommand{\cMm}{\smash{\overline{\mathcal{M}}}}%
\newcommand{\cJj}{\smash{\overline{\mathcal{J}}}}
\newcommand{\cJ}{\mathcal{J}}
\newcommand{\cI}{\mathcal{I}}
\newcommand{\tJj}{\smash{\widetilde{\mathcal{J}}}}
\newcommand{\sO}{\mathscr{O}}%
\newcommand{\bP}{\mathbb{P}}
\newcommand{\fZ}{\mathcal{Z}}
\newcommand{\mr}[1]{\mathrm{#1}}
\newcommand{\mc}[1]{\mathcal{#1}}
\newcommand{\mf}[1]{\mathfrak{#1}}
\newcommand{\bs}[1]{\boldsymbol{#1}}
\begin{document}

\title[Universal jacobian and the double ramification cycle]{Compactified universal jacobian and the double ramification cycle}
\author{Bashar Dudin}
\date{\today}
\address{Bashar Dudin\\
Centro de Matemática da Universidade de Coimbra\\
Apartado 3008\\
EC Santa Cruz \\
3001 - 501 Coimbra \\
Portugal}
\email{dudin@mat.uc.pt}
\thanks{The author was supported by the FCT research project \emph{Geometry of moduli spaces of curves and abelian varieties} EXPL/MAT-GEO/1168/2013}
\subjclass[2010]{14H10, 14H40, 14H60}
\maketitle

\begin{abstract}
  Using the compactified universal jacobian $\cJj_{g, n}$ over the moduli space of stable marked curves $\cMm_{g, n}$, we give an expression in terms of natural classes of the zero section of $\cJj_{g, n}$ in the (rational) Chow ring of $\cJj_{g, n}$. After extending variants of the Abel-Jacobi map to a locus containing curves of treelike type we give a formula for the pullback of the said zero section along these extensions. The same approach is also applied to recover known formulas for the pullback of theta divisors to the moduli space of marked stable curves.
\end{abstract}

\section{Introduction}

\subsection{The Eliashberg problem} 
Let $\underline{\tau} = (\tau_1, \ldots, \tau_n)$ be an integer valued non-zero $n$-tuple having $0$ sum entries. The double ramification cycle $[\DR]_{g, \underline{\tau}}$ in the moduli space of (smooth) genus $g$ $n$-marked curves $\cM_{g, n}$ is defined by the locus $\DR_{g, \underline{\tau}}$ of marked curves $(X, \underline{p})$, for $\underline{p} = (p_1, \ldots, p_n)$,  such that 
\begin{displaymath}
  \sO_X\Big(\sum_{i=1}^n \tau_ip_i\Big) \simeq \sO_X.
\end{displaymath}
This is precisely the locus of genus $g$ curves having a dominant map to the projective line, such that the fiber of $\infty$ is given by the $p_i$ points with positive $\tau_i$, the fiber of $0$ is given by the $p_i$ points with negative $\tau_i$ and ramification index at $p_i$ is $|\tau_i|$. The Eliashberg problem is to find compactifications of $\DR_{g, \underline{\tau}}$ in the moduli space of genus $g$ stable $n$-marked curves $\cMm_{g, n}$ where to write down the closure of $\DR_{g, \underline{\tau}}$ in terms of tautological classes of $\cMm_{g, n}$. 

There has been mainly two approaches to tackle the previous question, at least on partial compactifications of $\DR_{g, \underline{\tau}}$. One is to define an extension $[\DR]_{g, \underline{\tau}}^{GW}$ of $[\DR]_{g, \underline{\tau}}$ over the whole $\cMm_{g, n}$ as the pushforward of the virtual fundamental class of the moduli space of relative stable maps to a rubber $\bP^1$, by the forgetful map to $\cMm_{g, n}$. This is the point of view of \cite{CavalieriMarcusWisePolynomialfam}, it gives rise to a formula for $[\DR]_{g, \underline{\tau}}^{GW}$ over the locus of curves having rational tails. The other approach, on which we build up, is to look for extensions of the Abel-Jacobi map $\underline{\aj}$ from $\cM_{g, n}$ to the universal family $\mc{X}_g$ over the moduli space of polarized abelian varieties $\mathcal{A}_g$, sending a curve $(X, \underline{p})$ to $(\mathrm{Jac}(X), \sO_X(\sum_i \tau_ip_i))$, and compute the pullback of the zero section of $\mathcal{X}_g$ along these extensions. By using the extension of $\underline{\aj}$ to the locus $\cM_{g, n}^{ct}$ of curves of compact type R.~Hain computed in \cite{HainNormalFunctions} the class of an extension of $[\DR]_{g, \underline{\tau}}$ in terms of tautological classes over $\cM_{g, n}^{ct}$. By \cite{MarcusWiseStableMapsRelativeJacobian} this class happens to be equal to $[\DR]_{g, \underline{\tau}}^{GW}$ when restricted to curves of compact type. The starting point of Hain's formula is the elegant relation
\begin{equation}
  \label{eq:Theta}
  [\fZ_g] = \frac{[T]^g}{g!}
\end{equation}
expressing the zero cycle of $\mathcal{X}_g$ in terms of the universal symmetric theta divisor $T$ trivialized along the zero section. The pullback of $T$ to $\cM_{g, n}^{ct}$ is then given by 
\begin{equation}
  \label{eq:Theta2}
  \underline{\aj}^*[T] = -\frac{1}{4}\sum_{\substack{0 \leq h \leq g \\ A \subset \{1, \ldots, n\} \\ 1\leq h + |A| \leq g+n-1}} \Big(\sum_{i \in A}\tau_i\Big)^2\delta_{h, A},
\end{equation}
where $\delta_{0, i} = - \psi_i$ and $\delta_{h, A} = \delta_{g-h, A^c}$ are boundary divisors for $(h, A) \neq (0, \{i\})$ or $(g, \{1, \ldots, n\}-\{i\})$. Formula \ref{eq:Theta2} was later proven using simpler techniques in \cite{GrushZakDRC}. Formula \ref{eq:Theta} was extended by S.~Grushevsky and D.~Zakharov in \cite{GrushZakzerosection} to the partial compactification of $\mathcal{A}_g$ by rank $1$ semi-abelian varieties. This gave rise to an extension of the formula of $\underline{\aj}^*[\fZ_g]$ to the case of stable marked curves having at most one non-separating node. Recently A.~Pixton conjectured a formula for $[\DR]_{g, \underline{\tau}}^{GW}$ on $\cMm_{g, n}$.

The previous questions were considered by Hain in a more general setting. Let $k$ be an integer and $\underline{\tau}$ an integer valued $n$-tuple, such that $\sum_i\tau_i = k(2g-2)$. Write $\DR_{g, \underline{\tau}}^k$ for the locus of curves $(X, \underline{p})$ in $\cM_{g, n}$ satisfying  
\begin{displaymath}
  \sO_{X}\Big(\sum_{i=1}^n \tau_ip_i\Big) = \omega_X^{\otimes k}
\end{displaymath}
 where $\omega_X$ is the dualizing sheaf of $X$--in the smooth case it is just the sheaf of regular differential forms. From a modular point of view this is about looking at curves together with (rational) multiples of canonical divisors having a given specific form. Hain gives equivalent formulas to \ref{eq:Theta2} for any $k$ over $\cM_{g, n}^{ct}$ and Pixton's conjectures extend to these cases.

\subsection{Content of the present paper}
Rather than looking at the whole moduli space $\mathcal{A}_g$, we limit ourselves to the case of the universal jacobian $\cJ_{g, n}$ over $\cM_{g, n}$. A point in $\cJ_{g, n}$ is simply a tuple $(X, \underline{p}, L)$ where $(X, \underline{p})$ is a smooth marked curve and $L$ is a degree $0$ line bundle on $X$. For each $k$ and $\underline{\tau}$ the Abel-Jacobi map $\aj_k : \cM_{g, n} \rightarrow \cJ_{g, n}$ sending $(X, \underline{p})$ on $(X, \underline{p}, \sO_X(\sum_i\tau_ip_i)\otimes\omega_X^{-k})$ gives a section of the forgetful map $\epsilon_n : \cJ_{g, n} \rightarrow \cM_{g, n}$.

A standard way to look for compactifications of $\cJ_{g, n}$ is by allowing points $(X, \underline{p}, L)$ where $L$ is a simple torsion-free sheaf of rank $1$ on the stable marked curve $(X, \underline{p})$. To get a proper stack one has to make assumptions on the multidegree of $L$. This is done by choosing a polarisation on $\cMm_{g, n}$, imposing a stability condition on each torsion-free sheaf $L$, see section \ref{sec:CompactifiedUniversalJacobian} for more details. In our case, the natural polarisation is the one giving an extension of the zero section $\fZ_{g, n}$ to the corresponding compactification $\cJj_{g, n}$. In section \ref{sec:TheZeroCycle} we describe $\fZ_{g, n}$ as the degeneracy locus of the connection morphism $\phi_*(\mf{L}\otimes \sO_{D_{n}}) \rightarrow \Ri^1\phi_*\mf{L}(-D_n)$ where $\phi : \cJj_{g, n\mid 1} \rightarrow \cJj_{g, n}$ is the universal curve over $\cJj_{g, n}$, $\mf{L}$ is the universal quasi-stable torsion-free sheaf over $\cJj_{g, n}$ (a sheaf over $\cJj_{g, n\mid 1}$ in fact) and $D_n$ the $n$-th marked section. Using Thom-Porteous formula we get the relation
\begin{equation}
  \tag{\ref{eq:Z}}
  [\fZ_{g, n}] = \bigg\{\exp\Big(\sum_{s\geq 1} (-1)^s(s-1)!\big\{\phi_*(\ch(\mf{L})\Td^\vee(\Omega_\phi))\big\}_s\Big)\bigg\}_g
\end{equation}
where $\Omega_{\phi}$ is the relative sheaf of differential forms over $\cJj_{g, n}$ and $\{\bullet\}_\ell$ the degree $\ell$ part of $\bullet$. 

The usefulness of formula \ref{eq:Z} lies in the fact that it is straightforward to pullback classes obtained by pushing forward Chern classes of $\mf{L}$ and $\Omega_\phi$. When $\DR_{g, \underline{\tau}}^k$ is of expected codimension $g$, to give an extension of its cycle is thus only about extending $\aj_k$. This is for instance the case when $k = 0$ or $k=1$ and one of the $\tau_i$s is negative, an account of these facts can be found in \cite{2015arXiv150807940F}. The pullback of the zero section along extensions of $\aj_k$ does always make sense though. 

Let $\pi : \cMm_{g, n+1} \rightarrow \cMm_{g, n}$ be the universal curve over $\cMm_{g, n}$ and $D_i$ the section of $\pi$ corresponding to the $i$-th marked point. To extend $\aj_k$ means giving a quasi-stable line bundle over a locus of $\cMm_{g, n}$ whose restriction to $\cM_{g, n}$ is given by $\sO(\sum_i\tau_iD_i)$. Let $\cMm_{g, n}^{k, \underline{\tau}}$ be the union of the locus of curves of treelike type and the one of curves $(X, \underline{p})$ satisfying, for each union of irreducible components $Z \subset X$
\begin{equation}
  \tag{\ref{eq:balanced}}
  \sum_{p_i \in Z} \tau_i \geq k\deg(\omega_{X\mid Z}) - \frac{|Z\cap \overline{X - Z}|}{2},
\end{equation}
with strict inequality whenever $p_1 \in Z$ and $\emptyset \subsetneq Z \subsetneq X$. The complement of $\cMm_{g, n}^{k, \underline{\tau}}$ in $\cMm_{g, n}$ is the closure of the locus of curves having topological type a two vertex loopless graph, which doesn't satisfy \ref{eq:balanced}. By twisting $\sO(\sum_i\tau_iD_i)$ we show in section \ref{sec:PartialExtensionofAbel-JacobiMaps} that the line bundle 
\begin{equation}
  \tag{\ref{eq:qk}}
  \mc{L}(\underline{\tau}, k) = \sO\Big(\sum_{i=1}^n \tau_iD_i\Big)\otimes \omega^{-k}\otimes\sO\Big(\sum_{\substack{0 \leq h \leq \lfloor g/2 \rfloor \\ A \subset \{1, \ldots, n\} \\ 2 \leq |A| + h \leq g+ n-2 }} \big[k(1-2h)+\sum_{i \in A} \tau_i\big]\delta_{h, A\cup\{n+1\}}\Big).
\end{equation}
is a quasi-stable line bundle over $\cMm_{g, n}^{k, \underline{\tau}}$. The Abel-Jacobi map does thus extend to maps $\bar{\aj}_k$ defined over $\cMm_{g, n}^{k, \underline{\tau}}$. Obstructions to extend yet further ${\aj}_k$ maps are shortly discussed in section \ref{sec:PartialExtensionofAbel-JacobiMaps}. Pulling back \ref{eq:Z} along $\bar{\aj}_k$ we get the formula
\begin{equation}
  \tag{\ref{eq:finale}}
  \bar{\aj}_k^*[\fZ_{g, n}] = \bigg\{\exp\Big(\sum_{s\geq 1} (-1)^s(s-1)!\big\{\pi_*\big(\ch(\mc{L}(\underline{\tau}, k)\big)\Td^\vee(\Omega_\pi))\big\}_s\Big)\bigg\}_g.
\end{equation}
It agrees with Hain's formula for curves of compact type but we don't have a direct proof of this fact. The right hand side of \ref{eq:finale} is an explicit degree $g$ polynomial in tautological classes on $\cMm_{g, n}$. It is unfortunately much less transparent than the formula given by Hain and comparison with the extension given by Grushevsky and Zakharov is not clear. 

The starting point of the previous result lies in the fact we're able to write down the zero cycle of the compactified jacobian in terms of its universal sheaf, as well as sheaves coming from $\cMm_{g, n}$. The pullback along extensions of Abel-Jacobi maps is then straightforward. Following the same line of thought, we write down the class of the universal theta divisor $[\Theta]$ (trivialized along the zero section) in terms of $\mf{L}$, in this case we have that 
\begin{equation}
  \tag{\ref{eq:ThetaL}}
  [\Theta] = - \phi_*\left(\frac{c_1(\mf{L})^2}{2}\right).
\end{equation}
A simple computation in section \ref{sec:AFormulaForPullbacks} gives yet another way of computing the pullback of $[\Theta]$ along $\bar{\aj}_k$.

Lastly, in section \ref{subsec:Thetag}, we briefly explain how to adapt our strategy to compute the pullback of the universal theta divisor of the degree $g-1$ compactified jacobian along the Abel-Jacobi map. We get back the class computed in \cite{GrushZakDRC} up to the generic vanishing of the theta function along the locus of irreducible curves in $\cMm_{g, n}$. This also recovers the formula given by F.~M\"uller in \cite{MR3092284}, which computes the closure in $\cMm_{g, n}$ of the locus 
\begin{displaymath}
\mc{D}_g = \Big\{ (X, \underline{p}) \mid h^0\big(\sO_X(\sum_i \tau_i p_i)\big) \geq 1\Big\} 
\end{displaymath}
for smooth marked curves $X$, $\sum_i\tau_i = g-1$ and at least one $\tau_i$ is negative. 

After this paper was first posted to the arXiv, the author was made aware of related work of Jesse Kass and Nicola Pagani. In \cite{2015arXiv150703564K}, Kass and Pagani compute the pullback of the theta divisor of certain compactified Jacobians that are constructed in loc. cit. For a certain choice of stability parameter, they compute the pullback of the theta divisor to be the class in \ref{eq:Thetagfinal}. The author posted his paper to the arXiv on May 12, 2015. Kass and Pagani posted their preprint to the arXiv on July 13, 2015 and first publicly presented their work in a seminar at the University of Liverpool on March 10, 2015. 

\subsection*{Acknowledgements}
I'm grateful to Margarida Melo and Filippo Viviani for presenting me the questions studied here, for many useful discussions and for reading and commenting on early versions of the present paper.

\section{Compactified universal jacobian over $\cMm_{g, n}$}
\label{sec:CompactifiedUniversalJacobian}

We review the definition of compactified jacobians following \cite{EstevesCompactifyingJacobian}. Points of any such compactification are given by stable marked curves endowed with simple rank $1$ torsion-free sheaves satisfying a semi-stability condition. The first appearance of such compactifications is due to R.~Pandharipande in \cite{PandhSlope} for slope semi-stability, in the more general case of vector bundles. This last point of view, in the case of line bundles, is equivalent to the original approach by L.~Caporaso in \cite{Caporaso94compactification}. Both were compactifications of the universal jacobian over the moduli space of stable curves $\cMm_{g}$. The corresponding construction for marked stable curves was studied by M.~Melo in \cite{MeloCompactifiedPicardStacksMarked}. The equivalence between Pandharipande's and Caporaso's approaches were first proven in \cite{PandhSlope}. An extension to the marked case and more general stability conditions can be found in \cite{EstevesPaciniSemistablemodifications}. In the case at hand--the case of compactified universal jacobians--our main reference is \cite{MeloCompactificationsofJacobians}.

\begin{Conv}
  All our schemes are schemes over $\C$. We fix integers $d \in \Z$, $g \in \N$ and $n\in \N^*$ such that $2g+2+n > 0$. 
\end{Conv}

Let $\tJj_{g, n}^d$ be the stackification of the prestack whose $S$-sections are given by couples $(f : \mc{X} \rightarrow S, \underline{D}, \mc{L})$ consisting of a relative stable curve $f$ of genus $g$, disjoint \'etale relative Cartier divisors $\underline{D} = (D_1, \ldots, D_n)$ in the smooth locus of $f$ and a coherent sheaf $\mc{L}$ over $\mc{X}$ which is flat over $S$ and has simple torsion-free degree $d$ geometric fibers. The Cartier divisors $\underline{D}$ are called markings or marked points of $f$ and are assumed to be defined by images of sections of $f$. Morphisms from $(f : \mc{X} \rightarrow S, \underline{D}, \mc{L})$ on $(\psi: \mc{Y} \rightarrow T, \underline{E}, \mc{M})$ over $\alpha : S \rightarrow T$ are given by $S$-morphisms $\beta : \mc{X} \rightarrow \alpha^*\mc{Y}$ commuting with the markings and for which there is an invertible sheaf $\mc{N}$ on $S$ and an isomorphism $\mu : \mc{L} \simeq \beta^*\mc{M}\otimes f^*\mc{N}$.       
\begin{thm}[{\cite{MeloCompactificationsofJacobians}}]
  \label{thmMelo1}
  The stack $\tJj_{g, n}^d$ is a smooth irreducible DM-stack of dimension $4g-3+n$. It is representable over $\cMm_{g, n}$ and satisfies the existence part of the relative criterion for properness. 
\end{thm}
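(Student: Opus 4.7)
The plan is to decompose the theorem into four separate statements about the forgetful morphism
\[
  \tJj_{g,n}^d \longrightarrow \cMm_{g,n},\qquad (f,\underline{D},\mc{L}) \longmapsto (f,\underline{D}),
\]
and verify each by reducing it to known results on the relative moduli of simple, rank $1$, torsion-free sheaves on nodal curves, in the tradition of Altman--Kleiman and Esteves. Since $\cMm_{g,n}$ is a smooth irreducible Deligne--Mumford stack of dimension $3g-3+n$, representability, smoothness, irreducibility, and the existence part of the relative valuative criterion for the forgetful morphism will pass to $\tJj_{g,n}^d$ with the expected dimension $4g-3+n = (3g-3+n)+g$.

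First I would fix a scheme $S$ together with an $S$-family $(f : \mc{X} \to S, \underline{D})$ of stable marked curves classifying an $S$-point of $\cMm_{g,n}$, and identify the fibre product $S \times_{\cMm_{g,n}} \tJj_{g,n}^d$ with the moduli of simple, torsion-free, rank $1$, degree $d$ sheaves on $\mc{X}/S$ rigidified by the $\bG_m$-action coming from twists by line bundles pulled back from $S$. By Esteves' theorem this moduli is an algebraic space over $S$, so the forgetful morphism is representable and $\tJj_{g,n}^d$ inherits the Deligne--Mumford property from $\cMm_{g,n}$. Smoothness and relative dimension then follow from the deformation theory of a simple torsion-free rank $1$ sheaf $L$ on a fixed stable curve: the obstruction group lies in the second $\Ext$ group of $\Hhom(L,L)$ and vanishes by one-dimensionality of the curve, while the tangent space has dimension $g$, yielding relative dimension $g$ and total dimension $4g-3+n$.

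For irreducibility I would use smoothness to reduce to connectedness and then establish density of an irreducible open substack. Over the open locus $\cM_{g,n} \subset \cMm_{g,n}$ the morphism restricts to the classical universal Picard $\cJ_{g,n}^d$, whose total space is irreducible because the base is and the fibres are torsors under the connected Jacobian. It then suffices to show that the preimage of $\cM_{g,n}$ is dense in $\tJj_{g,n}^d$; this follows from the standard fact that, on any stable curve, the line bundles of any fixed multidegree form a dense open subset of the locus of simple torsion-free rank $1$ sheaves, while any such multidegree is reached from generic smoothings by elementary transformations across the nodes.

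Finally, for the existence part of the relative valuative criterion, I would take a DVR $R$ with fraction field $K$, a stable marked family $(f : \mc{X} \to \Spec R, \underline{D})$ and a simple torsion-free rank $1$ degree $d$ sheaf $\mc{L}_K$ on the generic fibre, and start from an arbitrary coherent extension of $\mc{L}_K$ to $\mc{X}$, for instance the saturation of $j_* \mc{L}_K$ where $j$ denotes the inclusion of the generic fibre. I would then iterate elementary modifications supported at the nodes of the special fibre in a Langton-type procedure until the central restriction itself becomes torsion-free, simple and of the required degree, possibly after a finite extension of $R$. The main obstacle here is precisely controlling this sequence of modifications so that the process terminates inside the simple rank $1$ locus, which is the technical heart of Esteves' construction in \cite{EstevesCompactifyingJacobian}, and is exactly what the author is invoking by citing Melo's work.
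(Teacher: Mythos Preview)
The paper does not give a proof of this theorem at all: it is stated with attribution to \cite{MeloCompactificationsofJacobians} and used as a black box, so there is no ``paper's own proof'' to compare against. Your proposal is therefore not a competing argument but an outline of what the cited reference does.

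As such an outline it is accurate in spirit. Representability via Esteves' algebraic space of simple rank~$1$ torsion-free sheaves, smoothness from the vanishing of $\Ext^2$ on a curve, relative dimension $g$ from $\dim \Ext^1(L,L)=g$, and the Langton-type completion argument for the existence part of the valuative criterion are exactly the ingredients Melo assembles (building on \cite{EstevesCompactifyingJacobian}). Two places where your sketch is thinner than a full proof: first, for irreducibility, saying that ``any such multidegree is reached from generic smoothings by elementary transformations'' is not quite the argument; what one actually uses is that for a regular smoothing over a DVR any line bundle on the generic fibre extends, and twisters then let you hit every multidegree on the special fibre, together with the fact that line bundles are dense in the torsion-free locus on each fibre. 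Second, in the valuative-criterion step you should be explicit that the input over $\Spec K$ already lies over a stable curve defined on all of $\Spec R$ (this is the \emph{relative} criterion over $\cMm_{g,n}$), so that no semistable reduction of the curve is needed and only the sheaf has to be filled in; this is exactly the setting of Esteves' Theorem~32. With those clarifications your sketch matches the cited proof.
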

The previous theorem says that to compactify the open substack $\cJ_{g, n}^d$, corresponding to smooth underlying curves, one has to look for separated universally closed substacks of $\tJj_{g, n}^d$. 

  Let $X$ be a stable marked curve of genus $g$ over $\C$ or, more generally, over an algebraically closed field. By a subcurve of $X$ we shall always mean a \emph{closed} subcurve of $X$. A polarization of degree $d$ on $X$ is a locally free $\sO_X$-module $P$ of rank $r> 0$ and degree $r(d+1-g)$. Given any subcurve $Y \subset X$ we write 
\begin{displaymath}
  q_Y(P) = \frac{\deg P_{\mid Y}}{r} + \frac{\deg \omega_{{X} \mid Y}}{2}
\end{displaymath}
where $\omega_{X}$ is the canonical sheaf on $X$. 
\begin{defn}
  \label{defn:semistability}
  Let $P$ be a polarization a degree $d$ on $X$ and $x \in X$ a smooth point. Given a closed subcurve $Y$ of $X$ we write $\kappa_Y$ for the number of points in the intersection of $Y$ with the closure of its complement in $X$. Given a line bundle $L$ on $X$ we write $\deg_YL$ for the degree of $L_{\mid Y}$ modulo torsion. A torsion-free sheaf $L$ of rank $1$ on $X$ of degree $d$ is said to be 
\begin{enumerate}
  \item $P$-semistable if for all non-empty subcurves $Y \subsetneq X$, $\deg_Y L  \geq q_Y - \frac{\kappa_Y}{2}$
  \item $P$-stable if inequalities in 1. are strict
  \item $(P, x)$-quasistable (q-stable for short) if (1) is verified and is strict whenever $x \in Y$.
\end{enumerate}
\end{defn}
\noindent By a polarization on $\cMm_{g, n}$ we understand a polarization on the universal curve over $\cMm_{g,n}$. This means a locally free sheaf $\mc{P}$ on $\cMm_{g, n+1}$ of rank $r$ and degree $r(d+1-g)$. 

Fix a polarization $\mc{P}$ on $\cMm_{g, n}$ and a section $\tau$ of $\cMm_{g, n+1}\rightarrow \cMm_{g, n}$. Write $\cJj_{g, n}^{d, \mc{P}, ss}$, $\cJj_{g, n}^{d, \mc{P}, s}$ and $\cJj_{g, n}^{d, \mc{P}, \tau}$ for the substacks of $\tJj_{g, n}^d$ given by triples $(f : \mc{X} \rightarrow S, \underline{D}, \mc{L})$ for which $\mc{L}$ has respectively $P$-semistable, $P$-stable and $(P, \tau)$-quasistable simple torsion-free geometric fibers of degree $d$. 
\begin{thm}[{\cite{MeloCompactificationsofJacobians}}]
  \label{thmMelo2}
$\cJj_{g, n}^{d, \bullet}$ are smooth irreducible DM-stacks of dimension $4g-3+n$ that are representable and of finite type over $\cMm_{g, n}$. Furthermore
\begin{enumerate}
  \item $\cJj_{g, n}^{d, \mc{P}, ss}$ is universally closed
  \item $\cJj_{g, n}^{d, \mc{P}, s}$ is separated
  \item $\cJj_{g, n}^{d, \mc{P}, \tau}$ is proper and has a projective coarse moduli space.
\end{enumerate}
\end{thm}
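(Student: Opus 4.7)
The plan is to bootstrap all of the preliminary assertions from Theorem \ref{thmMelo1} and then check the three extra properties by the valuative criterion. The three substacks $\cJj_{g,n}^{d,\mc{P},ss}$, $\cJj_{g,n}^{d,\mc{P},s}$, $\cJj_{g,n}^{d,\mc{P},\tau}$ are open in $\tJj_{g,n}^d$: on a flat family the multidegree of the torsion-free sheaf is locally constant over each stratum of $\cMm_{g,n}$ indexed by the topological type of the curve, and the inequalities of Definition \ref{defn:semistability} involve only discrete data ($\deg_Y L$, $\kappa_Y$, and the rational number $q_Y$ depending on $\mc{P}$). Openness is then immediate, and smoothness, dimension $4g-3+n$, representability and finite type over $\cMm_{g,n}$ are inherited from $\tJj_{g,n}^d$. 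Irreducibility follows by deforming any $P$-semistable sheaf to the open dense locus where the underlying curve is smooth, whose fibers over $\cM_{g,n}$ are torsors under $\Pic^d$ of an irreducible smooth curve.

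For assertion (1), the plan is to apply the valuative criterion for universal closedness. Given a DVR $R$ with fraction field $K$ and a $K$-point $(X_K,\underline{D}_K,L_K)$, Theorem \ref{thmMelo1} combined with properness of $\cMm_{g,n}$ yields, after a finite extension of $R$, a relative stable curve $X/R$ with markings and a simple torsion-free extension $L$ of $L_K$ on $X$. If the central fiber $L_k$ fails $P$-semistability, there is a subcurve $Y \subsetneq X_k$ witnessing this; performing an elementary modification of $L$ along $Y$ in the style of Langton produces a new torsion-free extension whose central multidegree moves in a controlled way towards the polarized target. The combinatorial termination argument of \cite{EstevesCompactifyingJacobian}, adapted to the $n$-marked setting, guarantees that iterating this procedure produces a $P$-semistable extension in finitely many steps.

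For assertion (2), I would apply the uniqueness half of the valuative criterion. Given two extensions $L$, $L'$ of a generic stable $L_K$, the generic identification yields, after twisting by a power of the uniformizer, a nonzero map $L \to L'$ whose kernel and cokernel would have to be supported on a proper subcurve $Y$ of $X_k$. A direct computation of the elementary transform shows that such a subcurve would force $\deg_Y L_k$ or $\deg_Y L'_k$ to saturate one of the $P$-stability inequalities; since $L_k$ and $L'_k$ are $P$-stable all these inequalities are strict, so the transform is trivial and $L \simeq L'$. For assertion (3), the same two arguments apply to the quasi-stable setting: modifications along components not meeting $\tau$ achieve the strict inequality for subcurves containing the marked section (existence), while the strict inequalities at subcurves containing $\tau$ rule out every nontrivial elementary transform (uniqueness), giving properness over $\cMm_{g,n}$.

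Finally, projectivity of the coarse moduli space can be obtained by one of two routes. Either construct the moduli space directly as a GIT quotient of an appropriate $\Quot$-scheme of $\mc{P}$-twisted Gieseker-semistable quotients over the universal curve (in the tradition of Pandharipande and Caporaso) and produce an ample line bundle by descending a determinantal bundle from $\Quot$; or invoke Keel--Mori to get the coarse algebraic space and then exhibit an ample line bundle on it built from Chern classes of the universal torsion-free sheaf and of $\mc{P}$. I expect the main obstacle to be the termination of the elementary-modification algorithm in (1) on reducible marked curves: the interplay of $\tau$, the markings $\underline{D}$ and arbitrary subcurves $Y$ requires a careful bookkeeping of how multidegrees change under iterated twists, and it is there that most of the work of \cite{MeloCompactificationsofJacobians} is concentrated.
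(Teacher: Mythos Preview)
The paper does not contain a proof of this theorem: it is stated as a result of \cite{MeloCompactificationsofJacobians} (and, ultimately, \cite{EstevesCompactifyingJacobian}) and is quoted without argument. So there is no ``paper's own proof'' to compare your proposal against; the author simply imports the statement as background for Section~\ref{sec:CompactifiedUniversalJacobian}.

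That said, your sketch is a fair summary of how the cited references actually establish the result. The reduction to Theorem~\ref{thmMelo1} for smoothness, dimension and representability, the Langton-type elementary modification argument for the existence part of the valuative criterion, and the uniqueness argument via strictness of the stability inequalities are precisely the ingredients in \cite{EstevesCompactifyingJacobian}; the projectivity of the coarse moduli space is obtained in \cite{MeloCompactificationsofJacobians} along the GIT lines you describe. Your final paragraph correctly identifies where the technical content lies. If anything, be aware that openness of the semistable/stable/quasistable loci in $\tJj_{g,n}^d$ is not as immediate as you suggest (multidegree is not locally constant across strata; one needs a genuine semicontinuity argument for the numerical inequalities, as in \cite[1.4]{EstevesCompactifyingJacobian}), but this is a minor point in an otherwise accurate outline.
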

In the following, we will mainly be interested in the compactification $\cJj_{g, n}$ of $\cJ_{g, n}$ given by the polarization $\omega^{-1}\oplus \sO$ and q-stable torsion-free rank 1 sheaves. For clarity the section $\tau$ defining q-stability is $D_1$, the relative étale Cartier divisor defined by the first marking. We limit ourselves to this case from now on.

Let $\cJj_{g, n \mid 1}$ be the stack whose $S$-sections are given by sections $(f : \mc{X} \rightarrow S, \underline{D}, D_{n+1}, \mc{L})$ where $D_{n+1}$ is an extra section. There are no assumptions on $D_{n+1}$; it can intersect other marked sections or the singular locus of $f$. The universal family over $\cJj_{g, n}$ is given by the map $\phi : \cJj_{g, n \mid 1} \rightarrow \cJj_{g, n}$ forgetting the extra section. It comes with a universal torsion free sheaf $\mf{L}$ of rank $1$ given over an $S$-section $(f:\mc{X} \rightarrow S, \underline{D}, D_{n+1}, \mc{L})$ by the pullback of $\mc{L}$ along $D_{n+1}$. One has also another forgetful map $\epsilon_{n \mid 1} : \cJj_{g, n \mid 1} \rightarrow \cMm_{g, n+1}$ sending $(f : \mc{X} \rightarrow S, \underline{D}, D_{n+1}, \mc{L})$ on $(f : \mc{X} \rightarrow S, \underline{D}, D_{n+1})$. The stack of stable $n$-marked curves having an extra section is isomorphic to the one of $n+1$-marked stable curves. We shall constantly make this identification. This discussion fits into the picture
\begin{equation}
\label{eq:ncartesian}
\begin{tikzpicture}[>=latex, baseline=(current  bounding  box.center)]
    \matrix (m) 
    [matrix of math nodes, row sep=2.5em, column sep=2.5em, text
    height=1ex, text depth=0.25ex]  
    { \mf{L} & \cJj_{g, n \mid 1} & \cJj_{g, n} \\
      & \cMm_{g, n+1} & \cMm_{g, n} \\}; 
    \path[-, font=\scriptsize]
    (m-1-1) edge (m-1-2);
    \path[->,font=\scriptsize]  
    (m-1-2) edge node[auto] {$\phi$} (m-1-3)
    (m-1-2) edge node[auto] {$\epsilon_{n \mid 1}$} (m-2-2)
    (m-1-3) edge node[auto] {$\epsilon_{n}$} (m-2-3)
    (m-2-2) edge node[auto] {$\pi$} (m-2-3);
  \end{tikzpicture}
\end{equation}
where the square diagram is commutative but generally not cartesian.

\section{The zero cycle of $\cJj_{g,n}$ and class of the theta divisor}
\label{sec:TheZeroCycle}

\subsection{The zero cycle of $\cJj_{g, n}$. } 
A line bundle $L$ on a stable curve $X$ is semistable for $\omega^{-1}\oplus \sO$ if and only if for each proper subcurve $Y \subset X$ the inequality
\begin{displaymath}
  \deg_YL \geq -\frac{\kappa_Y}{2}
\end{displaymath}
holds. In particular the trivial line bundle $\sO_X$ is stable and thus q-stable. The zero section of the universal Jacobian over $\cM_{g, n}$ does therefore extend to $\cJj_{g, n}$, its image is simply denoted by $\fZ_{g,n}$. It is a smooth irreducible substack of $\cJj_{g, n}$ of codimension $g$, its cycle in the Chow ring of $\cJj_{g, n}$ is written $[\fZ_{g, n}]$. We will be expressing this cycle in terms of ``natural'' cycles on $\cJj_{g, n}$. Namely the Chern characters of the universal torsion-free sheaf $\mf{L}$, the relative dualizing sheaf of $\cJj_{g, n \mid 1} \rightarrow \cJj_{g, n}$ and boundary classes. We first start by a lemma on the space of global sections of $L$.
\begin{lem}
  For semistable $L$ we have that $h^0(L) \leq 1$ with equality if and only if 
$L$ is locally free of $\underline{0}$ multidegree. 
\end{lem}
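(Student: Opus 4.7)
The strategy is to show that any non-zero section forces $L \simeq \sO_X$; combined with the fact that $h^0(\sO_X) = 1$ on the connected reduced curve $X$, this yields both the bound and the equality case.

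Starting from a non-zero $s \in H^0(L)$, let $Y \subset X$ be the union of the irreducible components $X_i$ on which the image of $s$ in $M_i := L_{|X_i}/\mathrm{tors}$ vanishes, and let $Z = \overline{X\setminus Y}$. I plan to show $\kappa_Y = 0$; as $X$ is connected this forces $Y = \emptyset$ (the case $Z = \emptyset$ is impossible since then $s$ would be a torsion section of the torsion-free sheaf $L$). On each $X_i \subset Z$ the restriction $s_i \in H^0(M_i)$ is non-zero with vanishing divisor of degree $\deg M_i$. At a \emph{locally free} node $p \in X_i \cap X_j$ with $X_j \subset Y$, the gluing condition in $L$ at $p$ forces $s_i(p) = s_j(p) = 0$; at a \emph{non-locally-free} node, however, the fibre $L_p$ is two-dimensional and the natural map $L_p \to M_{i,p}\oplus M_{j,p}$ is surjective, so no such vanishing is imposed. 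Writing $k_{Y,Z}$ for the number of non-locally-free nodes of $L$ lying on $Y \cap Z$, one obtains $\sum_{X_i \subset Z} \deg M_i \geq \kappa_Y - k_{Y,Z}$.

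For the matching upper bound I would invoke the partial normalization $\nu : X' \to X$ at the $k$ non-locally-free points of $L$, which yields a line bundle $L'$ on $X'$ with $\nu_* L' = L$. Euler-characteristic comparison, using $p_a(X') = g-k$, gives $\deg L' = \deg L - k = -k$; since the irreducible components of $X'$ and $X$ coincide we have $L'_{|X_i} = M_i$, whence $\sum_i \deg M_i = -k$. Together with the semistability estimate $\deg_Y L = \sum_{X_i \subset Y} \deg M_i \geq -\kappa_Y/2$, this yields $\sum_{X_i \subset Z} \deg M_i \leq -k + \kappa_Y/2$, and combining with the previous bound gives $\kappa_Y \leq 2(k_{Y,Z} - k) \leq 0$. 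Hence $\kappa_Y = 0$, forcing $Y = \emptyset$ and also $k_{Y,Z} = k = 0$; so $L$ is a line bundle and $s$ is non-zero on every component. The induced injection $\sO_X \hookrightarrow L$ has torsion cokernel of length $\deg L = 0$, and is therefore an isomorphism.

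The main obstacle is the treatment of non-locally-free nodes sitting on the boundary $Y \cap Z$, where the gluing-at-a-node argument that handled the line bundle case breaks down; the count $k_{Y,Z}$ of such nodes must be weighed against the total degree defect $-k$ produced by normalization in order to still force $\kappa_Y \leq 0$.
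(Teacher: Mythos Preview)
Your approach differs from the paper's. The paper argues by induction on the number of components: if $L$ is not of multidegree $\underline{0}$ (in particular if $L$ is not locally free, since then the pullback to the normalisation has negative total degree), one picks an irreducible component $Z$ on which $L$ has negative degree, so that $H^0(X,L)=\bigoplus_j H^0\big(X_j,L(-E_j)\big)$ for the connected components $X_j$ of $Z^c$ and $E_j=Z\cap X_j$; semistability forces each summand to involve a sheaf of negative degree, and one iterates. Your argument replaces this peeling-off procedure by a single global degree count balancing the vanishing imposed on a section against the semistability inequality for $Y$, which is more direct and avoids the induction.

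There is, however, an imprecision in your bookkeeping. The identities ``$L'_{|X_i}=M_i$'', hence $\sum_i\deg M_i=-k$, and $\deg_YL=\sum_{X_i\subset Y}\deg M_i$ fail as stated when $L$ has non-locally-free nodes lying on a single component or internal to $Y$. If $X_i$ carries a non-free self-node then $X_i'\neq X_i$, and one has rather $\deg M_i=\deg(L'_{|X_i'})+(\text{non-free self-nodes on }X_i)$; likewise $\deg_YL=\sum_{X_i\subset Y}\deg M_i+(\text{non-free non-self nodes internal to }Y)$. Feeding these corrections through your two inequalities yields
\[
\frac{\kappa_Y}{2}\;\le\;-\,(\text{non-free nodes joining two distinct components of }Z)\;\le\;0,
\]
so the conclusion $\kappa_Y=0$ (and then $k=0$) still follows and the proof can be salvaged; but the corrections should be made explicit rather than suppressed.
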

\begin{proof}
  Let $\nu : X^\nu \rightarrow X$ be the normalisation of $X$. The natural map $L \rightarrow \nu_*\nu^*L$ induces an injective map $\Hi^0(L) \rightarrow \Hi^0(\nu^*L)$. Recall that 
  \begin{displaymath}
    \deg(L) = \deg(\nu^*L) + t_L
  \end{displaymath}
where $t_L$ is the number of nodal points of $X$ where $L$ is not free. In particular, when $L$ is not free the total degree of $\nu^*L$ is negative (by definition the total degree of $L$ is $0$). 

There is nothing to prove when $X$ has only one irreducible component. Assume that $X$ has at least two such components and $L$ is not of multidegree $\underline{0}$. Notice that this is automatically the case if $L$ is not free. One can write $X = Z\cup Z^c$ for $Z$ an irreducible component of $X$ such that $L_{\mid Z}$ has negative degree and $Z^c$ is the closure of the complement of $Z$ in $X$. Let $X_j$ for $j=1, \ldots, m$ be the connected components of $Z^c$ and write $E= Z\cap Z^c$, $E_j = E\cap X_j$. We have that
\begin{displaymath}
\Hi^0(X, L) =   \Hi^0\big(Z^c, L(\sum_{x \in E} -x)\big) = \bigoplus_{j} \Hi^0\big(X_j, L(\sum_{x \in E_j}-x)\big).
\end{displaymath}
Now the semistability condition says
\begin{displaymath}
  \deg_{X_j}L \leq \frac{|E_j|}{2}.
\end{displaymath}
The connectedness of $X$ insures $L(\sum_{x \in E_j}-x)$ has negative degree on $X_j$. Using iteratively semistability inequalities we get that $h^0(L)=0$. We've thus shown that if $L$ is not of multidegree $\underline{0}$ then $h^0(L)=0$. Since $L$ cannot be of multidegree $\underline{0}$ if non-free we have just shown our claim.  
\end{proof}
\begin{cor}
  For semistable $L$ on $X$, $h^0(L) \neq 0$ if and only if $L\simeq \sO_X$.
\end{cor}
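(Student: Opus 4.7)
The plan is to read everything off the preceding lemma. The reverse implication $L \simeq \sO_X \Rightarrow h^0(L) \neq 0$ is immediate from connectedness of $X$, so I only need to prove the forward direction. By the lemma, the hypothesis $h^0(L) \neq 0$ upgrades to $h^0(L) = 1$ together with the structural fact that $L$ is locally free of multidegree $\underline{0}$.

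I would then pick a nonzero section $s \in H^0(X, L)$ and view it as a morphism $\sigma : \sO_X \to L$. My goal is to show $\sigma$ is an isomorphism; since both source and target are rank-$1$ locally free sheaves, it suffices to show that $s$ does not vanish at any point of $X$. The componentwise step is the following: on each irreducible component $X_i$ the restriction $L|_{X_i}$ has degree $0$, and if $s|_{X_i} \not\equiv 0$ then its pullback along the normalization $\nu_i : X_i^\nu \to X_i$ is a nonzero section of a degree-$0$ line bundle on a smooth irreducible curve, hence nowhere vanishing. In particular $s$ itself has no zeros on $X_i$ (neither on its smooth locus nor at any self-node).

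It remains to exclude that $s$ vanishes identically on some component. Let $W$ be the union of components on which $s$ vanishes identically and $Y$ the closure of its complement. If $W$ were nonempty, the connectedness of $X$, inherent to the notion of stable curve, would furnish a node $p \in Y \cap W$ lying on a component $X_j \subset Y$ where $s|_{X_j}$ is not identically zero; the gluing condition at $p$ would then force $s|_{X_j}(p) = 0$, contradicting the nowhere-vanishing assertion of the previous paragraph. Hence $W = \emptyset$, $s$ has no zeros on $X$, and $\sigma$ is the desired isomorphism $\sO_X \xrightarrow{\sim} L$. The only genuine obstacle is this last connectedness step; once the notation is fixed, it is a standard manipulation and the corollary follows.
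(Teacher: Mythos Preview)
Your proof is correct and follows essentially the same argument as the paper: use the lemma to reduce to $L$ locally free of multidegree $\underline{0}$, then observe that a nonzero section of a degree-$0$ line bundle on an irreducible component is nowhere vanishing, and propagate this across components by connectedness. Your version is simply more explicit (normalization, the sets $W$ and $Y$) where the paper compresses the same reasoning into two sentences.
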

\begin{proof}
  By the previous lemma $L$ has to be of multidegree $\underline{0}$. If a global section of $L$ vanishes at some point it vanishes on the irreducible component containing it. Since $X$ is connected it has to identically vanish on $X$. When $h^0(L) \neq 0$ there is thus a section that doesn't vanish anywhere on $X$. 
\end{proof}
The substack of $\cJj_{g, n}$ given by the zero section of $\cJj_{g, n} \rightarrow \cMm_{g, n}$ is formally defined by $S$-points $(f : \mc{X} \rightarrow S, \underline{D}, \mc{L})$ in $\cJj_{g, n}$ such that $\mc{L} \simeq f^*\mc{M}$ where $\mc{M}$ is an invertible sheaf on $S$. By base change in cohomology this is equivalently given by the $S$-points in $\cJj_{g, n}$ such that $\mc{L}_s \simeq \sO_{\mc{X}_s}$ for every fiber over $s \in S$. In the light of the previous lemma $\fZ_{g, n}$ is thus given by $S$-points $(f: \mc{X} \rightarrow S, \underline{D}, \mc{L})$ such that $h^0(\mc{L}_s) \geq 1$ for every $s \in S$. 

We describe $\fZ_{g, n}$ as the degeneracy locus of a map between vector bundles on $\cJj_{g, n}$. Let $(f : \mc{X} \rightarrow S, \underline{D}, \mc{L})$ be an $S$-point in $\cJj_{g, n}$. Pick one of the marked sections, say $D_n$, and consider the short exact sequence of sheaves over $\mc{X}$ 
\begin{displaymath}
  \begin{tikzpicture}[>=latex, baseline=(current  bounding  box.center)]
    \matrix (m) 
    [matrix of math nodes, row sep=2.5em, column sep=2.5em, text
    height=1ex, text depth=0.25ex]  
    { 0 & \mc{L}(-D_n) & \mc{L} & \sO_{D_n}\otimes \mc{L} & 0 \\}; 
    \path[->,font=\scriptsize]  
    (m-1-1) edge (m-1-2)
    (m-1-2) edge (m-1-3)
    (m-1-3) edge (m-1-4)
    (m-1-4) edge (m-1-5);
  \end{tikzpicture}
\end{displaymath}
 Its direct image by $f$ gives the long exact sequence 
 \begin{displaymath}
  \begin{tikzpicture}[>=latex, baseline=(current  bounding  box.center)]
    \matrix (m) 
    [matrix of math nodes, row sep=2.5em, column sep=2.5em, text
    height=1ex, text depth=0.25ex]  
    { 0 & f_*\mc{L} & \sO_{D_n}\otimes \mc{L} & \Ri^1\!f_*\mc{L}(-D_n)  & \Ri^1\!f_*\mc{L} & 0 \\}; 
    \path[->,font=\scriptsize]  
    (m-1-1) edge (m-1-2)
    (m-1-2) edge (m-1-3)
    (m-1-3) edge node[auto] {$\nabla$} (m-1-4)
    (m-1-4) edge (m-1-5)
    (m-1-5) edge (m-1-6);
  \end{tikzpicture}
 \end{displaymath}
The map $\nabla$ is a map between two locally free bundles over $S$. The source is of rank $1$ and the target of rank $g$. The degeneracy locus of $\nabla$ is defined by points $s \in S$ such that $\rank(\nabla\otimes k(s))\leq 0$. Because of commutation to base change of both source and target of $\nabla$ the tensor product $\nabla_s = \nabla\otimes k(s)$ is the one appearing in the long exact sequence in cohomology
\begin{displaymath}
  \begin{tikzpicture}[>=latex, baseline=(current  bounding  box.center)]
    \matrix (m) 
    [matrix of math nodes, row sep=2.5em, column sep=2.3em, text
    height=1ex, text depth=0.25ex]  
    { 0 & \Hi^0(\mc{L}_s) & \Hi^0(\sO_{D_n}\otimes \mc{L}_s) & \Hi^1(\mc{L}_s(-D_n))  & \Hi^1(\mc{L}_s) & 0 \\}; 
    \path[->,font=\scriptsize]  
    (m-1-1) edge (m-1-2)
    (m-1-2) edge (m-1-3)
    (m-1-3) edge node[auto] {$\nabla_s$} (m-1-4)
    (m-1-4) edge (m-1-5)
    (m-1-5) edge (m-1-6);
  \end{tikzpicture}
\end{displaymath}
The degeneracy locus of $\nabla$ is thus defined by the set of points $s \in S$ such that $h^0(\mc{L}_s) \geq 1$ and this is set theoretically $\fZ_{g, n}$. 
\begin{thm}
 Let $\phi : \cJj_{g, n \mid 1} \rightarrow \cJj_{g, n}$ be the universal curve over $\cJj_{g, n}$ and $\mf{L}$ the universal torsion-free sheaf over $\cJj_{g, n \mid 1}$. The zero cycle $[\fZ_{g, n}]$ is given by
\begin{equation}
\label{eq:Z}
[\fZ_{g, n}] = \bigg\{\exp\Big(\sum_{s\geq 1} (-1)^s(s-1)!\big\{\phi_*(\ch(\mf{L})\Td^\vee(\Omega_\phi))\big\}_s\Big)\bigg\}_g
\end{equation}
where $\Omega_\phi$ is the sheaf of relative differentials of $\cJj_{g, n \mid 1}$ over $\cJj_{g, n}$ and $\{\bullet\}_\ell$ is the degree $\ell$ part of $\bullet$.
\end{thm}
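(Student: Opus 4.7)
The text has already identified $\fZ_{g,n}$ set-theoretically as the degeneracy locus of
\[
\nabla \colon \phi_*(\mf{L}\otimes\sO_{D_n}) \longrightarrow \Ri^1\phi_*\mf{L}(-D_n),
\]
a morphism of locally free sheaves of ranks $1$ and $g$ on $\cJj_{g,n}$. The expected codimension of the rank-zero locus $D_0(\nabla)$ is $1\cdot g=g$, which is exactly the codimension of $\fZ_{g,n}$. My first step is therefore to invoke the Thom-Porteous formula: for $k=0$ and source of rank $1$ it reduces to a single Chern class, giving
\[
[\fZ_{g,n}] \;=\; c_g\bigl(\Ri^1\phi_*\mf{L}(-D_n)\,-\,\phi_*(\mf{L}\otimes\sO_{D_n})\bigr)
\]
as a Chern class of a virtual bundle in the Chow ring of $\cJj_{g,n}$. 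To be entitled to read this as the cycle class of the reduced substack $\fZ_{g,n}$ (and not some positive multiple), I would check that $\nabla$ has generically maximal rank along $\fZ_{g,n}$ by restricting to the smooth locus $\cJ_{g,n}$ and using the classical fact that $\nabla$ then cuts out the zero section transversally; this is the one point that requires care.

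Next I rewrite the virtual bundle in K-theory using the long exact sequence coming from $0\to\mf{L}(-D_n)\to\mf{L}\to\mf{L}\otimes\sO_{D_n}\to 0$. Since $D_n$ is a section of $\phi$, the pushforward of $\mf{L}\otimes\sO_{D_n}$ has no higher derived part, so in $K^0(\cJj_{g,n})$
\[
\Ri^1\phi_*\mf{L}(-D_n)-\phi_*(\mf{L}\otimes\sO_{D_n}) \;=\; -R\phi_*\mf{L}+\phi_*\mf{L}(-D_n).
\]
The key observation here is that $\phi_*\mf{L}(-D_n)=0$: fiberwise over a point $(X,\underline{p},L)$ of $\cJj_{g,n}$, any global section of $L(-D_n)$ is a section of $L$ vanishing at $p_n$, but by the lemma and corollary already proved either $H^0(L)=0$, or $L\simeq\sO_X$ and the only sections are nonzero constants. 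Hence $H^0(L(-D_n))=0$ on every fiber and cohomology and base change forces $\phi_*\mf{L}(-D_n)=0$ globally. Consequently
\[
[\fZ_{g,n}] \;=\; c_g\bigl(-R\phi_*\mf{L}\bigr).
\]

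Finally I translate this Chern class into the stated exponential formula. By Grothendieck-Riemann-Roch applied to the proper lci morphism $\phi$ (a family of nodal curves), the total Chern character satisfies
\[
\ch(R\phi_*\mf{L}) \;=\; \phi_*\bigl(\ch(\mf{L})\,\Td^{\vee}(\Omega_\phi)\bigr),
\]
so the degree $s$ piece of the right-hand side is precisely $\ch_s(R\phi_*\mf{L})$. Combining this with the classical identity
\[
c(-A) \;=\; \exp\Bigl(\sum_{s\geq 1}(-1)^{s}(s-1)!\,\ch_s(A)\Bigr)
\]
(which follows from $\log(1+x)=\sum_{s\geq 1}(-1)^{s-1}x^s/s$ via splitting principle) applied to $A=R\phi_*\mf{L}$, and extracting the degree-$g$ component, yields the expression of the theorem.

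\textbf{Expected main obstacle.} The only genuinely delicate point is the transversality/reducedness verification for $\nabla$ that legitimizes the use of Thom-Porteous as giving exactly $[\fZ_{g,n}]$; everything else is a bookkeeping exercise with short exact sequences, base change, GRR, and the $\exp$-$\log$ identity converting Chern characters into Chern classes.
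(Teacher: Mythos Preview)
Your argument is correct and follows the same overall route as the paper: Thom--Porteous for $\nabla$, simplification of the virtual bundle via the long exact sequence to $-R\phi_*\mf{L}$, then GRR and the Newton identity converting $\ch_s$ into $c_g$. Your explicit observation that $\phi_*\mf{L}(-D_n)=0$ fiberwise (hence globally by base change) is exactly what justifies the four-term exact sequence the paper writes down without comment, so at that step you are in fact filling in a detail the paper leaves implicit.

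The one substantive difference is how you handle the multiplicity issue, i.e.\ why Thom--Porteous gives $[\fZ_{g,n}]$ on the nose rather than a positive multiple. You propose to check that $\nabla$ cuts out the zero section transversally over the smooth locus $\cJ_{g,n}$, appealing to the classical reducedness of the Brill--Noether locus $W_0^0=\{\sO_X\}$ on a smooth Jacobian; this is legitimate but you leave it as a citation. The paper takes a different tack: since $\fZ_{g,n}$ is irreducible, $[\mathcal{D}_\nabla]$ and $[\fZ_{g,n}]$ are automatically proportional, and the paper pins down the constant by pulling both classes back along the zero section $\bs{z}:\cMm_{g,n}\to\cJj_{g,n}$. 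There $\bs{z}^*[\mathcal{D}_\nabla]=c_g(R^1\pi_*\sO)$ by direct substitution, while $\bs{z}^*[\fZ_{g,n}]=c_g(N_{\fZ_{g,n}})$ and the normal bundle is identified with $\epsilon_n^*R^1\pi_*\sO$ via deformation theory of the trivial sheaf. The paper's verification is fully self-contained; yours is shorter provided one accepts the classical fact, but as written it is the one step you have not actually carried out.
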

\begin{proof}
   The zero locus $\fZ_{g, n}$ is set theoretically equal to the degeneracy locus $\mathcal{D}_{\nabla}$ of $\nabla$. Since $\cJj_{g, n}$ is smooth and $\fZ_{g, n}$ of the expected codimension $g$ we get the relation
  \begin{displaymath}
    [\mathcal{D}_{\nabla}] = c_g\big(\Ri^1\!\phi_*\mf{L}(-D_n) - \phi_*(\sO_{D_n}\otimes \mf{L})\big)
  \end{displaymath}
out of the Thom-Porteous formula \cite{Fulton}. The standard relation between Chern classes and Chern characters gives
\begin{displaymath}
  [\mathcal{D}_{\nabla}] = \Big\{ \exp\Big(\sum_{s \geq 1}(-1)^{s-1}(s-1)!\Big[\ch_s\big(\Ri^1\!\phi_*\mf{L}(-D_n)\big)-\ch_s\big(\phi_*\sO_{D_n}\otimes \mf{L}\big)\Big]\Big)\Big\}_g
\end{displaymath}
The exact sequence 
\begin{displaymath}
  \begin{tikzpicture}[>=latex, baseline=(current  bounding  box.center)]
    \matrix (m) 
    [matrix of math nodes, row sep=2.5em, column sep=2.5em, text
    height=1ex, text depth=0.25ex]  
    { 0 & \phi_*\mc{L} & \phi_*(\sO_{D_n}\otimes \mc{L}) & \Ri^1\!\phi_*\mc{L}(-D_n)  & \Ri^1\!\phi_*\mc{L} & 0 \\}; 
    \path[->,font=\scriptsize]  
    (m-1-1) edge (m-1-2)
    (m-1-2) edge (m-1-3)
    (m-1-3) edge (m-1-4)
    (m-1-4) edge (m-1-5)
    (m-1-5) edge (m-1-6);
  \end{tikzpicture}
\end{displaymath}
and the additivity of the Chern character in exact sequences says
\begin{equation}
  \label{eq:Dnabla}
  [\mathcal{D}_{\nabla}] =  \Big\{ \exp\Big(\sum_{s \geq 1}(-1)^s(s-1)!\ch_s(\phi_!\mf{L})\Big)\Big\}_g.
\end{equation}
Using GRR formula we get the right hand side of \ref{eq:Z}.

Because $\fZ_{g, n}$ is irreducible, $[\fZ_{g,n}]$ and $[\mathcal{D}_{\nabla}]$ are proportional. To check they are equal it is enough to check their pullbacks along the zero section $\bs{z} : \cMm_{g, n} \rightarrow \cJj_{g, n}$ are the same. Let $\pi : \cMm_{g, n+1} \rightarrow \cMm_{g, n}$ be the universal curve over $\cMm_{g, n}$
\begin{Claim}
  The pullbacks $\bs{z}^*[\mathcal{D}_{\nabla}]$ and $\bs{z}^*[\fZ_{g, n}]$ are equal to $c_g(\Ri^1\!\pi_*\sO)$.
\end{Claim}
\noindent We start by computing $\bs{z}^*[\mathcal{D}_{\nabla}]$. The zero section extends naturally to a section $\hat{\bs{z}}$ on the universal curves by sending $(f : \mc{X} \rightarrow S, \underline{D}, D_{n+1})$ on $(f : \mc{X}\rightarrow S, \underline{D}, D_{n+1}, \sO_X)$. The resulting diagram 
\begin{displaymath}
\begin{tikzpicture}[>=latex, baseline=(current  bounding  box.center)]
    \matrix (m) 
    [matrix of math nodes, row sep=2.5em, column sep=2.5em, text
    height=1ex, text depth=0.25ex]  
    { \cJj_{g, n \mid 1} & \cMm_{g, n+1} \\
      \cJj_{g, n} & \cMm_{g, n} \\}; 
    \path[->,font=\scriptsize]  
    (m-1-1) edge node[left] {$\phi$} (m-2-1)
    (m-1-2) edge node[above] {$\hat{\bs{z}}$} (m-1-1)
    (m-1-2) edge node[auto] {$\pi$} (m-2-2)
    (m-2-2) edge node[auto] {$\bs{z}$} (m-2-1);
  \end{tikzpicture}
\end{displaymath}
is cartesian and on the level of Chow rings $\bs{z}^*\phi_* = \phi_*\hat{\bs{z}^*}$. This is a straightforward check; a detailed account of it can be found in section \ref{sec:AFormulaForPullbacks}. Using these facts we get from \ref{eq:Dnabla} that
\begin{displaymath}
    [\mathcal{D}_{\nabla}] = \Big\{ \exp\Big(\sum_{s \geq 1}(-1)^{s-1}(s-1)!\Big[\ch_s\big(\Ri^1\!\pi_*\sO\big)-\ch_s\big(\pi_*\sO\big)\Big]\Big)\Big\}_g = c_g\big(\Ri^1\!\pi_*\sO\big).
\end{displaymath}
To compute the pullback of $[\fZ_{g, n}]$ recall $\bs{z}^*[\fZ_{g, n}]$ is equally given by $\epsilon_{n *}\big([\fZ_{g, n}]^2\big)$ where $\epsilon_n : \cJj_{g, n} \rightarrow \cMm_{g, n}$ is the natural forgetful map. The latter class is then given in terms of the normal bundle along $\fZ_{g, n}$ by
\begin{displaymath}
  \bs{z}^*[\fZ_{g, n}] = \epsilon_{n *}\big(c_g(N_{\fZ_{g, n}})\cap [\fZ_{g, n}]\big). 
\end{displaymath}
On the level of tangent spaces the section $\bs{z}$ sends a first order deformation of a marked curve $(X, \underline{p})$ on the corresponding first order deformation together with the trivial bundle of its total space. The normal bundle at $(X, \underline{p}, \sO_X)$ is then given by first order deformations $\sO_X$ as a q-stable torsion-free rank $1$ sheaf over fixed $(X, \underline{p})$. Even when looking at $\sO_X$ as a coherent sheaf these deformations are classified by $\Hi^1\big(X, \mc{E}\!nd(\sO_X, \sO_X)\big) = \Hi^1(\sO_X)$. This is enough to check that $N_{\fZ_{g, n}}$ is equal to $\epsilon_n^*\Ri^1\!\pi_*\sO_X$ and the claim follows from a direct use of the projection formula. 
\end{proof}
Previously known expressions of \ref{eq:Z} on partial compactifications of $\cJ_{g, n}$ were based on relation \ref{eq:Theta}, which is only valid over curves of compact type. Correction terms were computed in \cite{GrushZakzerosection} to extend the latter relation to the locus of curves having at most $1$ non-separating node. The advantage of the present formula, from a modular point of view, is that $\mf{L}$ is much easier to deal with than the theta divisor. This will be put in practice in section \ref{sec:AFormulaForPullbacks}.  We identify in the following section the term in \ref{eq:Z} coming from the class of the universal theta divisor $[\Theta]$, trivialized along the zero section.

\subsection{Relating the theta divisor to $\mf{L}$}
Marked points do not play a role in this section. The corresponding compactification $\cJj_g^{d, \mc{P}}$ of the universal jacobian $\cJ_g$ over (unmarked) stable curves is given by semi-stable torsion-free sheaves with respect to a given polarisation $\mc{P}$ over $\cMm_g$, see section \ref{sec:CompactifiedUniversalJacobian} for details. We will only be concerned by the degree $0$ and $g-1$ compactified universal jacobians $\cJj_{g}$ and $\cJj_g^{g-1}$; the former has polarisation $\omega^{-1}\otimes \sO$ while the latter has trivial polarisation $\sO$. The compactified universal jacobian $\cJj_{g}^{g-1}$ comes with a canonical theta divisor $\Theta_{g-1}$ set theoretically given by
\begin{equation}
  \label{eq:defnThetag}
  \Theta_{g-1} = \big\{ (X, L) \mid h^0(L)\geq 1\big\}.
\end{equation}
We have the relation 
\begin{equation}
  \label{eq:ThetaDet}
  -\Theta_{g-1} = d_\pi\big(\mc{L}_{g-1}\big)
\end{equation}
where $d_\pi(\bullet)$ is the determinant in cohomology $\det(\pi_!\bullet)$, see for instance \cite[13]{ArbaCorGriff}. This description can be found in \cite{MR2105707} and \cite{MR2557139}. We question how to pullback $\Theta_{g-1}$ on $\cJj_g$. 

For the degree $0$ jacobian of a single smooth curve $X$, defining a theta divisor is about pulling back $\Theta_{g-1}$ along an isomorphism $ - \otimes \xi : \Jac(X) \simeq \Jac^{g-1}(X)$ for a degree $g-1$ line bundle $\xi$. A line bundle on $\Jac(X)$ is said to be symmetric if it is invariant by pullback along the involution $L \mapsto L^\vee$. Using the set-theoretic description \ref{eq:defnThetag} one can show that the pullback of $\Theta_{g-1}$ along $-\otimes \xi$ is symmetric if and only $\xi$ is a theta characteristic, i.e. if and only if $\xi^{\otimes 2} \simeq \omega$. There is no canonical choice of a theta characteristic for a given curve, there is thus no hope to define a symmetric theta divisor this way on $\cJ_g$. There is, however, a roundabout way to partially tackle this question: the pullback of the rational equivalence class of $2\Theta_{g-1}$ by $-\otimes \xi$ does not depend on the theta characteristic $\xi$. Indeed, two theta characteristics $\xi$ and $\xi'$ differ by a root of the trivial line bundle $\eta = \xi\otimes \xi'^{-1}$. Using the theorem of the square \cite[II. 6.]{MR2514037} we have that 
\begin{equation}
  \label{eq:equivrattheta}
2\Theta_{g-1} \sim 2t_\eta\Theta_{g-1},
\end{equation}
where $t_{\eta}$ is translation by $\eta$ on the Picard group of $\Jac^{g-1}(X)$. Pulling back \ref{eq:equivrattheta} along $-\otimes \xi$ we get the desired relation between the pullbacks along $-\otimes \xi$ and $-\otimes \xi'$ of $2\Theta_{g-1}$.

We should thus be able to pullback $2\Theta_{g-1}$ to $\cJ_g$. The point is that there is no global choice of a theta characteristic on $\cM_{g}$, one has to take in account all such possible choices. Assume for now $g \geq 2$ and let $\smash{\mc{S}_{g}^{2}}$ be the moduli space of smooth genus $g$ spin curves, see for instance \cite{MR1082361}. A point of $\smash{\mc{S}_{g}^2}$ is a couple $(X, \xi)$ where $X$ is a marked curve of genus $g$ and $\xi$ a line bundle of degree $g-1$ together with an isomorphism $\alpha : \xi^{\otimes 2}\simeq \omega_{X}$. The moduli space $\smash{\mc{S}_{g}^2}$ comes with a forgetful map to $\cM_{g}$ only keeping the underlying marked curve, and a universal line bundle $\Xi$ on the universal curve such that $\Xi^{\otimes 2} \simeq \omega$. On the fiber product $\mc{S}_{g}^{2} \times_{\cM_{g}} \cJ_{g}$ we have two maps 
\begin{equation}
  \begin{tikzpicture}[>=latex, baseline=(current  bounding  box.center)]
    \matrix (m) 
    [matrix of math nodes, row sep=2.5em, column sep=2.5em, text
    height=1ex, text depth=0.25ex]  
    { \cJ_g & \mc{S}_{g}^2\times_{\cM_g}\cJ_{g} & \cJ_{g}^{g-1}\\}; 
    \path[->,font=\scriptsize]  
    (m-1-2) edge node[above] {$\beta$} (m-1-1)
    (m-1-2) edge node[above] {$\alpha$} (m-1-3);
  \end{tikzpicture}
\end{equation}
where $\beta$ is the projection on the second factor and $\alpha$ sends $(f : \mc{X} \rightarrow S, \mc{L}, \xi)$ on $(f : \mc{X} \rightarrow S, \mc{L}\otimes \xi)$. By the standard theory of spin curves, the map $\beta$ is finite of degree $2^{2g}$. Write $\langle \bullet, - \rangle$ for the Deligne pairing of two line bundles $\bullet$ and $-$ on the universal curves over $\cJ_{g}^{g-1}$ or $\cJ_g$. Using \ref{eq:ThetaDet} we have that 
\begin{displaymath}
  -2\alpha^*\Theta_{g-1}  = d_\phi(\beta^*\mf{L}\otimes \Xi)^{\otimes 2}.
\end{displaymath}
Using GRR \cite[XII 6. 5.31]{ArbaCorGriff}, we can write
\begin{align*}
  -2\alpha^*\Theta_{g-1}   & = \langle \beta^*\mf{L}\otimes \Xi, \beta^*\mf{L}\otimes \Xi^{-1} \rangle d_\phi(\sO)^{\otimes 2} \\
                          & = \langle \beta^*\mf{L}, \beta^*\mf{L} \rangle \langle \beta^*\mf{L}, \Xi \rangle \langle \beta^*\mf{L} , \Xi^{-1} \rangle \langle \Xi, \Xi^{-1} \rangle d_\phi(\sO)^{\otimes 2}.   
\end{align*}
Since $\langle \bullet, \Xi^{-1} \rangle = \langle \bullet, \Xi \rangle^{-1}$ we get that
\begin{displaymath}
  -2\alpha^*\Theta_{g-1} = \langle \beta^*\mf{L}, \beta^*\mf{L} \rangle \langle \Xi, \Xi \rangle^{-1}d_\phi(\sO)^{\otimes 2}.
\end{displaymath}
One can check by looking at the Chern classes in the group of relative Cartier divisors of $\mc{S}_{g}^2\times_{\cM_g} \cJ_g$ that 
\begin{displaymath}
  c_1\langle \Xi, \Xi^{-1} \rangle = -\phi_*\left(\frac{c_1(\omega)^{2}}{4}\right) = -3\lambda_1 = c_1\left(d_\phi(\sO)^{- \otimes 3}\right).
\end{displaymath}
The middle equality comes from the Mumford relation over $\cM_g$ \cite[XII 7.3]{ArbaCorGriff} and $\lambda_1$ is the first Chern class of the pullback to $\mc{S}_g^2\times_{\cM_g}\cJ_g$ of the Hodge bundle over $\cM_g$. Putting the previous computations together we get that
\begin{equation}
  \label{eq:pullbackthetag}
  -2\alpha^*\Theta_{g-1} = \beta^*\left(\langle \mf{L}, \mf{L} \rangle d_\phi(\sO)^{- 1}\right).
\end{equation}
\begin{rem}
  Computations in the genus $1$ case are similar. One has to be careful though, in order to work with Deligne-Mumford stacks one has to stick to working with $\cM_{1, 1}$. We get the same expression as \ref{eq:pullbackthetag} but over $\cM_{1, 1}$. 
\end{rem}
\begin{rem}
  \label{rem:extpairing}
  We extend the Deligne pairing to the case of $\mf{L}$ and $\mf{L}^{\vee}$ over $\cJj_{g}$ using the relation
  \begin{displaymath}
    \langle L, M \rangle = d_\phi(L\otimes M)d_\phi(L)^{-1}d_\phi(M)^{-1}d_\phi(\sO)
  \end{displaymath}
  for $L$ and $M$ invertible sheaves or in $\{\mf{L}^{\otimes m} , \mf{L}^{\vee \otimes m} \mid m \in \N \}$. All standard computational properties including the Serre duality extend to this case. A way to check this fact is by recalling that $\cJj_{g}^d$ is isomorphic to the compactification by Caporaso of the universal jacobian using quasi-stable curves. The pullback of $\mf{L}$ to the latter compactification is a well-behaved invertible sheaf. See \cite{EstevesPaciniSemistablemodifications} for details. One can thus check the standard properties extend as claimed by pullback on Caporaso's compactification using semi-stable line bundles on quasi-stable curves.
\end{rem}
\begin{defn}
  A squared theta divisor over $\cJj_g$ is a divisor given on each geometric fiber over $\cM_g^{ct}$ as the pullback of $2\Theta_{g-1}$ along a theta characteristic. It is said to be 
  \begin{itemize}
    \item
      trivialized along the zero section if its pullback along the zero section $\cMm_{g} \rightarrow \cJj_{g}$ is trivial
    \item 
      symmetric if it is invariant under the involution of the Picard group of $\cJj_{g \mid 1}$ sending $\mf{L}$ on $\mf{L}^\vee$.
\end{itemize}
In case of genus $1$ the definition makes sense replacing $\cJj_g$ and $\cMm_g$ respectively by $\cJj_{1, 1}$ and $\cMm_{1, 1}$. 
\end{defn}
\begin{prop}
  There is a unique symmetric squared theta divisor $2\Theta$ over $\cJj_g$ which is trivialized along the zero section. It is then given by the formula
  \begin{equation}
    \label{eq:ThetaLl}
    2\Theta = \langle \mf{L}, \mf{L} \rangle^{-1}.
  \end{equation}
\end{prop}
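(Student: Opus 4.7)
The plan is to take $\langle\mf{L},\mf{L}\rangle^{-1}$ as the candidate squared theta divisor, verify the three required properties, and then argue uniqueness by a see-saw descent. Inverting (\ref{eq:pullbackthetag}) yields
\[
2\alpha^*\Theta_{g-1}\;=\;\beta^*\!\bigl(\langle\mf{L},\mf{L}\rangle^{-1}\otimes d_\phi(\sO)\bigr).
\]
The universal curve $\phi:\cJj_{g\mid 1}\to\cJj_g$ is pulled back from $\pi:\cMm_{g,1}\to\cMm_g$ via the forgetful map $\epsilon:\cJj_g\to\cMm_g$, so $d_\phi(\sO)=\epsilon^*d_\pi(\sO)$ is trivial along every geometric fiber of $\epsilon$. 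Consequently, on each fiber $\Jac(X)$ of $\epsilon$ over $\cM_g^{ct}$, $\langle\mf{L},\mf{L}\rangle^{-1}$ restricts, up to rational equivalence, to the pullback of $2\Theta_{g-1}$ along any theta characteristic of $X$; the choice of characteristic is immaterial thanks to the rational equivalence $2\Theta_{g-1}\sim 2t_\eta\Theta_{g-1}$ recalled in (\ref{eq:equivrattheta}). This shows $\langle\mf{L},\mf{L}\rangle^{-1}$ is a squared theta divisor.

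The remaining two properties are essentially immediate from the formalism. Pullback compatibility of the Deligne pairing gives $\bs{z}^*\langle\mf{L},\mf{L}\rangle=\langle\sO,\sO\rangle$, which by the formula of Remark~\ref{rem:extpairing} collapses to $d_\pi(\sO)\cdot d_\pi(\sO)^{-1}\cdot d_\pi(\sO)^{-1}\cdot d_\pi(\sO)=\sO$, so $\langle\mf{L},\mf{L}\rangle^{-1}$ is trivialized along the zero section. Symmetry under $\mf{L}\mapsto\mf{L}^\vee$ follows from the bilinearity of the extended pairing, which yields $\langle\mf{L}^\vee,\mf{L}^\vee\rangle=\langle\mf{L},\mf{L}\rangle$.

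For uniqueness, suppose $D\in\Pic(\cJj_g)$ is the difference of two candidates, both trivialized along $\bs{z}$. By the defining fiberwise property (again using (\ref{eq:equivrattheta})), $D$ restricts trivially to every geometric fiber of $\epsilon$ over $\cM_g^{ct}$. A see-saw argument on the flat projective family $\epsilon|_{\epsilon^{-1}(\cM_g^{ct})}$ then produces $M\in\Pic(\cM_g^{ct})$ with $D|_{\epsilon^{-1}(\cM_g^{ct})}=\epsilon^*M$. The main technical hurdle lies in extending this identification across the boundary $\cMm_g\setminus\cM_g^{ct}$, where the fibers of $\epsilon$ become compactified Jacobians of nodal curves; I would handle it by a direct analysis of boundary divisor classes on the smooth irreducible stack $\cJj_g$, absorbing any extension ambiguity into a suitable $M\in\Pic(\cMm_g)$. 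Applying $\bs{z}^*$ and using $\bs{z}^*\epsilon^*=\id$ together with the trivialization condition then forces $M=\sO$, hence $D=\sO$.
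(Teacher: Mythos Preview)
Your existence argument has a gap: formula (\ref{eq:pullbackthetag}) was established only over $\cM_g$ via the moduli of spin curves $\mc{S}_g^2$, yet you invoke it to conclude the squared theta divisor property on all geometric fibers over $\cM_g^{ct}$. The definition requires the fiberwise identification with $2\Theta_{g-1}$ over every compact-type curve, including reducible ones. The paper handles this by an explicit computation on the two-component boundary strata: using the gluing maps $\imath_h:\cJ_{h,x}\times\cJ_{g-h,y}\to\cJ_g^{ct}$ and the decomposition $\imath_{h\mid 1}^*\mf{L}=p_h^*\mf{L}+p_{g-h}^*\mf{L}$, one checks that $\imath_h^*\langle\mf{L},\mf{L}\rangle=\langle p_h^*\mf{L},p_h^*\mf{L}\rangle\otimes\langle p_{g-h}^*\mf{L},p_{g-h}^*\mf{L}\rangle$ (the cross term vanishes by disjointness of supports), which matches the fact that the theta divisor of a product Jacobian is the sum of pullbacks from the factors. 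You need some version of this step.

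For uniqueness your see-saw route is genuinely different from the paper's, but as written it is incomplete exactly where you flag it. The paper avoids the boundary extension entirely by quoting the computation of $\Pic(\cJj_g)$ due to Melo--Viviani: this group is freely generated by $\langle\mf{L},\mf{L}\rangle\otimes d_\phi(\sO)^{\otimes 2}$, $d_\phi(\omega)$, and pullbacks from $\cMm_g$, so the trivialization condition alone pins down the extension. Your argument would instead need to show that every divisor on $\cJj_g$ supported over $\cMm_g\setminus\cM_g^{ct}$ is a pullback from $\cMm_g$; this amounts to the irreducibility of $\epsilon^{-1}(\delta_{\mathrm{irr}})$, which is true but is itself a nontrivial statement about compactified Jacobians of irreducible nodal curves and is not something you can simply ``absorb''. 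In effect your proposed boundary analysis, once made precise, reproduces part of the content of the Melo--Viviani result, so the paper's citation is the cleaner path.
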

\begin{proof}
  The fact the right-hand side of \ref{eq:ThetaLl} is symmetric comes from the standard properties of the Deligne pairing in the case of torsion free sheaves as mentioned in remark \ref{rem:extpairing}.

  Over $\cM_g$ the pullback of $2\Theta_{g-1}$ is given by formula \ref{eq:pullbackthetag}. The line bundle $\langle \mf{L}, \mf{L}\rangle^{-1} d_\phi(\sO)$ is thus a squared theta divisor over $\cJ_g$. Its pullback along the zero section is given by $d_\phi(\sO)$. It is enough to get the desired formula over $\cJ_g$. To extend it on $\cJ_{g}^{ct}$ on only needs to check formula \ref{eq:ThetaLl} on the locus of curves having two components. Let $\imath_{h} : \cJ_{h, x} \times \cJ_{g-h, y} \rightarrow \cJ_{g}^{ct}$ be the gluing maps identifying $x$ and $y$ and write $p_i$ for the first and second projections from $\cJ_{h, x}\times \cJ_{g-h, y}$ respectively on $\cJ_{h, x}$ and $\cJ_{g-h, y}$. The universal curve over $\cJ_{h, x}\times\cJ_{g-h, y}$ is the disjoint union
  \begin{equation}
    \label{eq:univcurveboundary}
    \left(\cJ_{h, x \mid 1}\times \cJ_{g-h, y}\right) \bigsqcup  \left(\cJ_{h, x} \times \cJ_{g-h, y \mid 1}\right)
  \end{equation}
with the obvious maps $\phi$ and $\imath_{h \mid 1}$ respectively to $\cJ_{h, x}\times \cJ_{g-h, y}$ and $\cJ_{g}^{ct}$. Write $p_h$ and $p_{g-h}$ for the maps whose domain is \ref{eq:univcurveboundary} and only keeping $\cJ_{h, x \mid 1}$ and $\cJ_{g-h, y \mid 1}$. We have that $\phi \circ p_h = p_1$ and $\phi\circ p_{g-h} = p_2$. Using the simple relation  $\imath_{h \mid 1}^*\mf{L} = p_h^*\mf{L} + p_{g-h}^*\mf{L}$ on the corresponding universal curves we get that
\begin{align}
  \label{eq:ThetaCompacttype}
  \imath_{h}^*\langle \mf{L}, \mf{L} \rangle & = \langle p_h^*\mf{L}, p_h^*\mf{L} \rangle \langle p_h^*\mf{L}, p_{g-h}^*\mf{L}\rangle^{\otimes 2}\langle p_{g-h}^*\mf{L}, p_{g-h}^*\mf{L} \rangle \\
  & = \langle p_h^*\mf{L}, p_h^*\mf{L} \rangle \langle p_{g-h}^*\mf{L}, p_{g-h}^*\mf{L} \rangle. \nonumber
\end{align}
The middle term vanishes because $p_h^*\mf{L}$ and $p_{g-h}^*\mf{L}$ have disjoint supports. Each term of the right-hand side of \ref{eq:ThetaCompacttype} respectively correspond to the pullbacks $-2p_1^*\Theta$ and $-2p_2^*\Theta$ because of the previous work on the smooth case. Now, the equality
\begin{displaymath}
  \imath_{h}^*\langle \mf{L}, \mf{L} \rangle = -2\Theta
\end{displaymath}
is just about rewriting the fact that the theta divisor of a decomposable jacobian is the union of pullbacks along the projection to each factor.

Uniqueness is a consequence of the work of Melo and Viviani \cite{MeloVivianiPicardGroup} on the Picard group of $\cJj_g$. Using our notation theorem B of \textit{loc. cit.} states that the Picard group of $\cJj_g$ is freely generated by $\langle \mf{L}, \mf{L} \rangle d_\phi(\sO)^{\otimes 2}$, $d_\phi(\omega)$ and line bundles obtained by pullback from $\cMm_g$. Thus, the only extension of $2\Theta$ to $\cJj_g$ that is trivialized along the zero section is the one given by formula \ref{eq:ThetaLl}. 
\end{proof}
\begin{cor}
  Let $[\Theta]$ be half the class of the symmetric squared theta divisor trivialized along the zero section in the (rational) Chow ring of $\cJj_{g}$. We then have 
  \begin{equation}
    \label{eq:ThetaL}
    [\Theta] = - \phi_*\left(\frac{c_1(\mf{L})^2}{2}\right).
  \end{equation}
\end{cor}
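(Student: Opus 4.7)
The plan is to take first Chern classes of both sides of the identity
\[
2\Theta = \langle \mf{L}, \mf{L}\rangle^{-1}
\]
established in the preceding proposition, and then to divide by two in view of the definition of $[\Theta]$ as half the class of the symmetric squared theta divisor.

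More concretely, I would first recall the standard relation between the first Chern class of a Deligne pairing and pushforwards of products of Chern classes. For line bundles $L$ and $M$ on the universal curve $\phi : \cJj_{g\mid 1}\rightarrow \cJj_g$, one has
\[
c_1\langle L, M\rangle \;=\; \phi_*\bigl(c_1(L)\,c_1(M)\bigr),
\]
which is an immediate consequence of the GRR computation already used in the derivation of \ref{eq:pullbackthetag} (the relation $c_1\langle \Xi, \Xi^{-1}\rangle = -\phi_*(c_1(\omega)^2/4)$ is an instance of this formula). Applying this to $L = M = \mf{L}$ and using linearity in the inverse gives
\[
c_1\bigl(\langle \mf{L}, \mf{L}\rangle^{-1}\bigr) \;=\; -\phi_*\bigl(c_1(\mf{L})^2\bigr).
\]
Combining with the proposition yields $c_1(2\Theta) = -\phi_*(c_1(\mf{L})^2)$, and dividing by two produces exactly \ref{eq:ThetaL}.

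The only real point to verify is that the Deligne pairing identity $c_1\langle L,M\rangle = \phi_*(c_1(L)c_1(M))$ actually makes sense and holds when one of the entries is the universal torsion-free (and not necessarily locally free) sheaf $\mf{L}$ on $\cJj_{g\mid 1}$. This is precisely the content of remark \ref{rem:extpairing}: the extended pairing is defined via $d_\phi(L\otimes M)\,d_\phi(L)^{-1}\,d_\phi(M)^{-1}\,d_\phi(\sO)$, and its standard computational properties are obtained by pullback to the Caporaso compactification, where $\mf{L}$ becomes an honest line bundle on the universal quasi-stable curve and the usual Deligne-pairing identities apply verbatim.

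I expect this last point to be the main (and essentially only) subtlety: once the pairing and its GRR-type first Chern class formula are secured in the torsion-free setting via Caporaso's compactification, the corollary is just a one-line computation from the proposition. No additional input beyond the proposition, the definition of $[\Theta]$ as half the class of the squared theta divisor, and remark \ref{rem:extpairing} is needed.
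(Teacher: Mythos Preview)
Your proposal is correct and matches the paper's own proof, which simply states that the corollary is a direct computation of first Chern classes of both sides of \ref{eq:ThetaLl}. You have merely spelled out in detail what that one-line computation is, including the relevant formula $c_1\langle L,M\rangle = \phi_*(c_1(L)c_1(M))$ and the justification via remark \ref{rem:extpairing} for the torsion-free case.
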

\begin{proof}
This is a direct computation of first Chern classes of both sides of \ref{eq:ThetaLl}.
\end{proof}
\begin{rem}
  In the case of marked underlying curves, the theta divisor is obtained by pullback along the forgetful morphism $\cJj_{g, n} \rightarrow \cJj_g$ in the case of genus $g \geq 2$ and by pullback along $\cJj_{1, n} \rightarrow \cJj_{1, 1}$ for the genus $1$ case. Formula \ref{eq:ThetaLl} does still hold in these cases. 
\end{rem}
Putting back relation \ref{eq:ThetaL} into the expression of the zero cycle \ref{eq:Z} one can rewrite \ref{eq:Z} in the form
  \begin{align*}
  [\fZ_{g, n}]   & =  \bigg\{ \exp \left( -\phi_* \left( \frac{c_1(\mf{L})^2}{2} \right) \right)\times\exp \left(\frac{c_1(\mf{L})c_1(\omega_\phi)}{2} - \frac{B_2}{2}(c_1(\omega_\phi)^{2} + \Delta)\right) \\
 & \qquad  \times\exp\big(\sum_{s\geq 2}(-1)^s(s-1)! \Big( \phi_* (\ch(\mf{L})\Td^\vee(\Omega_{\phi})) \Big)_s \bigg\}_g 
\end{align*}
where $\omega_\phi$ is the dualizing sheaf of $\phi$, $\Delta$ the singular locus in the fibers of $\phi$. This gives 
\begin{equation}
  \label{eq:ZTheta}
    [\fZ_{g, n}]  =  \sum_{a+b = g} \frac{[\Theta]^a}{a!} P_b 
\end{equation}
where $P_b$ is a polynomial in pushforward of classes on $\cJj_{g, n \mid 1}$ lying in the degree $b$ part of the Chow ring. It is not clear how to write $P_b$ as a polynomial in an elegant minimal set of classes on $\cJj_{g, n}$, the main reason being that not much is known about the intersection theory of $\cJj_{g, n}$. Relation \ref{eq:Theta} (pulled back to the universal jacobian) does however imply that on the locus $\cM_{g, n}^{ct}$ of curves of compact type we have
\begin{equation}
  \label{eq:VanishingTheta}
  \sum_{a + b = g} \frac{[\Theta]^{a}}{a!}P_b = \frac{[\Theta]^g}{g!}.
\end{equation}

\section{Partial extension of Abel-Jacobi maps}
\label{sec:PartialExtensionofAbel-JacobiMaps}

Let $\underline{\tau} = (\tau_1, \ldots, \tau_n)$ be a non-zero $n$-tuple of integer entries having zero sum. The forgetful map $\epsilon_n : \cJ_{g, n} \rightarrow \cM_{g, n}$ sending an $S$-point of $\cJ_{g, n}$ on the underlying marked stable curve has a section $\aj$--known as the Abel-Jacobi map--sending an $S$-point $(f : \mc{X} \rightarrow S, \underline{D})$ on $(f : \mc{X} \rightarrow S, \underline{D}, \sO_{\mc{X}}(\sum_i \tau_iD_i))$. More generally given an integer $k \in \Z$ and an $n$-tuple of integers $(\tau_1, \ldots, \tau_n)$ having sum equal to $k(2g-2)$ one can define the section of $\epsilon_n$
\begin{displaymath}
  \aj_k(f : \mc{X} \rightarrow S, \underline{D}) = \Big(f : \mc{X} \rightarrow S, \underline{D}, \sO_{\mc{X}}(\sum_i\tau_iD_i)\otimes \omega^{-k}\Big).
\end{displaymath}
These Abel-Jacobi maps can be extended to partial compactifications of $\cM_{g, n}$. The extension to curves of compact type is standard and the one to curves with at most $1$ non-separating node was described in \cite{GrushZakzerosection}. These extensions appear to give partial answers to the Eliashberg problem: express the pullback of $[\fZ_{g, n}]$ along extensions of $\aj_0$ to partial compactifications of $\cM_{g, n}$. 

We will extend the $\aj_k$s yet further. We start first by noticing there is an open substack of $\cMm_{g, n}$ where $\aj_k$ makes sense without any change. We then review the extension to the case of curves of treelike type. The indeterminacy locus of $\aj_k$s is contained in the complement of the union of both previous loci.

\subsection{The locus of balanced curves}

Given a family of curves $f : \mc{X} \rightarrow S$ and a line bundle $\mc{L}$ over $\mc{X}$ the set of points $s \in S$ where $\mc{L}_s$ is q-stable in the sense of definition \ref{defn:semistability} is open in $S$, see for instance \cite[1.4]{EstevesCompactifyingJacobian}. 
\begin{defn}
  A curve $(X, \underline{p})$ in $\cMm_{g, n}$ is said to be $(\underline{\tau}, k)$-balanced if for each subcurve 
\begin{equation}
\label{eq:balanced}
Z \subset X, \quad \sum_{p_i \in Z} \tau_i \geq k\deg(\omega_{X \mid Z}) -\frac{\kappa_Z}{2}
\end{equation}
with strict inequality whenever $p_1 \in Z$. 
\end{defn}
This condition is the one for the q-stability of $\sO_X(\sum_{i=1}^n \tau_ip_i)\otimes \omega_X^{-k}$. It defines an open substack $\cB_{g, n}^{k, \underline{\tau}}$ of $\cMm_{g, n}$. It is clear that the section $\aj_k$ extends to $\cB_{g, n}^{k, \underline{\tau}}$. The extension is denoted by $\aj_k$ as well.

\subsection{Further extensions and twisters}

Over the complement of $\cB_{g, n}^{k, \underline{\tau}}$ the line bundle $\sO(\sum_i\tau_iD_i)\otimes \omega^{-k}$ is no more q-stable; the section $\aj_k$ does not make sense anymore as it is. Notice however that any extension of $\aj_k$ to a point in $\cMm_{g, n}-\cB_{g, n}^{k, \underline{\tau}}$ gives a point in the closure $\overline{\aj_k\cB_{g,n}^{k, \underline{\tau}}}$. Such a point in the image appears as the special fiber of a family $(f : \mc{X} \rightarrow \Spec(R), \underline{D}, \mc{L})$ where $R$ is a DVR and $\mc{L}$ is a q-stable torsion-free sheaf whose restriction to the generic fiber $\mc{X}_{\eta}$ is given by $\sO_{\mc{X}_{\eta}}(\sum_i\tau_iD_{i, \eta})\otimes \omega_{\mc{X}_\eta}^{-k}$. Properness of $\cJj_{g, n}$ says that $\mc{L}$ is unique up to tensor product with an invertible sheaf coming from the base. We shortly recall facts about twisters to make clear how to extend $\aj_k$ on a bigger substack in $\cMm_{g, n}$. Basic facts about twisters can be found in \cite{CaporasoNerontype}. 

Let $(f : \mc{X} \rightarrow T, \underline{D})$ be a regular model over a DVR $R$ having smooth generic fiber $\mc{X}_\eta$ and central fiber $X$. Write $X_v$ for the irreducible component of $X$ corresponding to the vertex $v \in V$ of the dual graph of $X$. Over the generic smooth fiber the line $\mc{L}_\eta = \sO_{\mc{X}_\eta}(\sum_i\tau_iD_i)\otimes \omega^{-k}$ is q-stable. The valuative criterion of properness for $\cJj_{g, n}$ implies $\mc{L}_{\eta}$ extends uniquely (up to tensor product by a pullback from the base) into a q-stable line bundle $\mc{L}$ over $\mc{X}$. We are going to build such limits for treelike curves by tensoring the line bundle $\sO_{\mc{X}}(\sum_i\tau_iD_i)\otimes \omega^{-k}$ with suitable twisters over $\mc{X}$. A treelike curve is a stable curve such that each edge joining two different vertices of its dual graph is separating, i.e. taking out that edge gives a graph with an extra connected component. 

Recall that a twister is a line bundle over $X$ of the form $\sO_{\mc{X}}(\sum_v\gamma_vX_v)\otimes \sO_X$. The set of twisters ${\rm{Tw}}_f$ acts faithfully on $\sO_{\mc{X}}(\sum_i\tau_iD_i)\otimes \omega^{-k}$ by tensor product. 
\begin{lem}
  If $X$ is treelike then any q-stable line bundle $L$ on $X$ has $\underline{0}$ multidegree. 
\end{lem}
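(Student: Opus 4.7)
The plan is to show that every irreducible component $X_v$ of the treelike curve $X$ satisfies $\deg_{X_v} L = 0$, the decisive input being that, once loops are removed, the dual graph of $X$ is a tree.

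First I would specialize the q-stability inequality to the polarization $P = \omega^{-1}\oplus \sO$ that defines $\cJj_{g, n}$. A direct computation gives $\deg P_{\mid Y}/r = -\deg \omega_{X\mid Y}/2$ for any subcurve $Y$, so $q_Y(P)=0$. Since $L$ has total degree $0$, q-stability reduces to the requirement that $\deg_Y L \geq -\kappa_Y/2$ for every non-empty proper subcurve $Y \subsetneq X$, with strict inequality when $p_1 \in Y$.

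Next I would fix a vertex $v$ of the dual graph and use the treelike hypothesis to write $\overline{X - X_v} = Y_1 \sqcup \cdots \sqcup Y_b$, where $b$ is the number of non-loop edges incident to $v$ and each $Y_j$ is a connected subcurve meeting $X_v$ at exactly one node, so $\kappa_{Y_j} = 1$. Applying q-stability to each $Y_j$ gives $\deg_{Y_j} L \geq -1/2$; since degrees are integers, this forces $\deg_{Y_j} L \geq 0$ (the strict-inequality clause when $p_1 \in Y_j$ is automatically satisfied by this integer bound). Consequently, using $\deg L = 0$,
\begin{displaymath}
\deg_{X_v} L = -\sum_{j=1}^b \deg_{Y_j} L \leq 0.
\end{displaymath}

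Finally I would sum this inequality over all vertices of the dual graph. The total degree being $0$, one obtains $0 = \sum_v \deg_{X_v}L \leq 0$, forcing equality in every term and hence $\deg_{X_v}L = 0$ for all $v$. I do not foresee any genuine obstacle: the whole argument rests on the very definition of treelike, namely that the non-loop edges of the dual graph are separating, so that removing $X_v$ disconnects its complement into one connected component per non-loop edge at $v$. The subtle part, if any, is to be careful that loops at $v$ contribute nothing to $\kappa_{X_v}$ nor to the decomposition of $\overline{X - X_v}$, which is why only non-loop edges enter the count $b$.
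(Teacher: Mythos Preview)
Your argument is correct. It is, however, organized differently from the paper's proof. The paper proceeds by induction on the number of irreducible components: it picks a leaf $v$ of the (loop-contracted) dual graph, shows that $-\tfrac12 \leq \deg_{X_v} L \leq \tfrac12$ forces $\deg_{X_v} L = 0$, and then verifies that the q-stability inequalities for $L$ on $X$ restrict to q-stability inequalities for $L_{\mid X'}$ on $X' = \overline{X - X_v}$, reducing to a treelike curve with one fewer component. Your approach is instead global: for \emph{every} vertex $v$ you use the branch decomposition $\overline{X - X_v} = \bigsqcup_j Y_j$ with $\kappa_{Y_j}=1$ to get $\deg_{X_v} L \leq 0$, and then the zero-sum constraint forces all these inequalities to be equalities simultaneously. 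Your route is slightly slicker and avoids the inductive bookkeeping; the paper's route has the minor advantage of making transparent how q-stability passes to the subcurve $X'$, which fits with the later discussion of twisters along a filtration of the tree. Both arguments rest on the same essential fact, namely that in a treelike curve the connected components of the complement of any irreducible component have $\kappa = 1$.
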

\begin{proof}
  If $X$ has only one irreducible component there is nothing to prove. Assume $X$ has at least $2$ components. Let $v$ be a leaf of the dual graph $\Gamma_X$ and $X_v$ the corresponding irreducible component. The q-stability condition states 
\begin{displaymath}
  -\frac{1}{2} \leq \deg_{X_v}\!L \leq \frac{1}{2}
\end{displaymath}
and thus $L$ has degree $0$ on $X_v$. Write $X'$ for $X_v^c = \overline{X - X_v}$. Taking any subcurve $Z = Y\cup X_v$ we have the inequality
\begin{displaymath}
  -\frac{\kappa_{Z}}{2} \leq \deg_{Z}\!L \leq \frac{\kappa_Z}{2}.
\end{displaymath}
The middle term is in fact equal to $\deg_Y\!L$ and $\kappa_Z = \kappa_{Y\mid X'}$, the number of intersection points of $Y$ with $Y^c$ in $X'$. We thus get the inequality 
\begin{displaymath}
  -\frac{\kappa_{Y}}{2} \leq \deg_{Y}\!L \leq \frac{\kappa_Y}{2}
\end{displaymath}
in $X'$. Since this condition is harder than the one for $Y$ in $X$, it is enough to check the multidegree of $L$ on $X'$ is $0$. Since $X'$ has less components than $X$ the result is obtained by induction. 
\end{proof}
\begin{rem}
  In general a q-stable line bundle is not of multidegree $\underline{0}$. The simplest example comes from a banana curve.
\end{rem}
\begin{lem}
  \label{lem:twister}
Assume $X$ is treelike. There is then a unique q-stable line bundle in the orbit of $\sO_{\mc{X}}(\sum_i\tau_iD_i)\otimes \omega^{-k}$ under the action of $\mr{Tw}_{f}$.  
\end{lem}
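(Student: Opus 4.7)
The plan is to reduce the statement to a linear algebra problem on the dual graph of $X$. By the preceding lemma, any q-stable line bundle on a treelike curve has multidegree $\underline{0}$. Conversely, on a connected curve the $\underline{0}$ multidegree automatically satisfies q-stability, since each proper subcurve $Z$ has $\kappa_Z \geq 1$ and $0 > -\kappa_Z/2$. Consequently, finding a unique q-stable line bundle in the $\mr{Tw}_f$-orbit of $M := \sO_{\mc{X}}(\sum_i\tau_iD_i)\otimes \omega^{-k}$ amounts to showing that there is a twister $T$, unique up to isomorphism on $X$, such that $(M \otimes T)_{|X}$ has multidegree $\underline{0}$.

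This becomes a system on the vertex set $V$ of the dual graph of $X$. For a twister $T = \sO_{\mc{X}}\bigl(\sum_v \gamma_v X_v\bigr)\otimes \sO_X$ the restriction has degree on $X_u$ equal to $\sum_v \gamma_v (X_v\cdot X_u)$. From $X\cdot X_v = 0$ one gets $X_v^{2} = -\sum_{u \neq v}X_v\cdot X_u = -\kappa_v$, where $\kappa_v$ is the number of separating nodes joining $X_v$ to the rest (non-separating loops at $v$ do not enter $X_v\cdot X_u$ for $u\neq v$). Because $X$ is treelike, distinct components are joined by at most one separating node, so the intersection matrix, restricted to the vertices of the quotient graph obtained by deleting loops, is $-L$ where $L$ is the Laplacian of a tree. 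Writing $\delta_u = \deg_{X_u}M = \sum_{p_i \in X_u}\tau_i - k\deg\omega_{X\mid X_u}$, the condition $(M\otimes T)_{|X}$ has multidegree $\underline{0}$ reads $L\gamma = \delta$, and $\sum_u \delta_u = \sum_i \tau_i - k(2g-2) = 0$ by hypothesis.

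For existence I would argue by induction, peeling leaves of the tree. A leaf $v$ has $\kappa_v = 1$ and a unique neighbor $u$, so choosing $\gamma_v = \delta_v$ zeroes the degree on $X_v$ while shifting $\delta_u \mapsto \delta_u + \delta_v$; iterating, the total degree $\sum_u \delta_u = 0$ is preserved at each stage and the last remaining component automatically has degree $0$. All adjustments are integral, so a genuine $\gamma \in \Z^V$ is produced (equivalently, the reduced Laplacian of a tree is unimodular, so the image of $L$ over $\Z$ is the full hyperplane of zero-sum vectors).

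For uniqueness, if $T_1,T_2$ are two twisters whose twists of $M$ both have multidegree $\underline{0}$, then $T_1\otimes T_2^{-1}$ is a twister with multidegree $\underline{0}$ on $X$, i.e.\ the corresponding $\gamma \in \Z^V$ lies in $\ker L$. Since the quotient graph is a connected tree, $\ker L$ is spanned by $(1,\ldots,1)$, so $\gamma = c(1,\ldots,1)$ and $T_1\otimes T_2^{-1} = \sO_{\mc{X}}(cX)\otimes \sO_X = \sO_X$; hence $T_1{}_{|X}\cong T_2{}_{|X}$. The only slightly delicate point is the integrality of the twister, which is taken care of by the tree structure; everything else is formal once the intersection-matrix-as-Laplacian dictionary is set up.
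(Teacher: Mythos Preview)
Your proof is correct and follows essentially the same route as the paper's: both reduce to the fact that on a tree the Laplacian surjects onto zero-sum integer vectors with kernel $\Z(1,\dots,1)$, and both realize this concretely via leaf-peeling. The paper in fact remarks that the ``short answer'' is that the class group of a treelike graph is trivial, which is exactly your Laplacian argument; it then opts for a longer constructive proof that tracks the explicit twister at each step, because those explicit coefficients are used in the subsequent proposition to write down the global line bundle $\mc{L}(\underline{\tau},k)$. Your treatment is cleaner on uniqueness (the paper leaves this implicit in the class-group remark), while the paper's version has the advantage of producing the exact formula \eqref{eq:twisterr} needed downstream.
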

\begin{proof}
  The point is to prove that, given a line bundle $\mc{L}$ on $\mc{X}$, there is a twister sending $\mc{L}_{\mid X}$ to a multidegree $\underline{0}$ line bundle on $X$. A short answer would be to recall that the class group of a treelike graph is trivial, see for instance \cite{CaporasoNerontype}. We shall however go more into details in order to keep track of how to get the desired twister.

Let $\Gamma$ be the dual graph of $X$ and $V$ the set of vertices of $\Gamma$. The irreducible component of $X$ corresponding to a vertex $v \in V$ is denoted $X_v$. Let $L$ be the restriction $\mc{L}_{\mid X}$ and write $(m_v^0)_{v \in V}$ for the multidegree of $L$ on $X=X_0$. Since $\Gamma$ is treelike there is a filtration covering $\Gamma$
\begin{displaymath}
  \Gamma = \Gamma_0 \supsetneq \Gamma_1 \supsetneq \cdots \supsetneq \Gamma_{|V|-1} \supsetneq \emptyset
\end{displaymath}
built up by choosing leaves $v_r$ of the loopless graph obtained out of $\Gamma_r$ by contracting possible loops and satisfying
\begin{displaymath}
  \Gamma_{r+1} = \Gamma_r - v_r.
\end{displaymath}
The right hand side of the previous equality is short for the subgraph of $\Gamma_r$ generated by all vertices except $v_r$. It does contain all edges that are not half-edges of $v_r$ or in geometric edges incident to $v_r$. For each $v_r$ let $B_{v_r}$ be the branch of $\Gamma$ out of $v_r$ ; removing the unique edge linking $v_r$ to $\Gamma_{r+1}$, $B_{v_r}$ is the component containing $v_r$. Define on $\mc{X}$ the following sequence $(\mc{L}_r)_{0 \leq r \leq |V|-1}$ of line bundles
\begin{enumerate}
\item $\mc{L}_0 = \mc{L}$
\item let $L_r = \mc{L}_{r \mid X}$ and write $(m_{v}^r)_{v \in V}$ for the multidegree of $L_r$, then
  \begin{equation}
    \label{eq:Li}
    \forall r \geq 0, \quad \mc{L}_{r + 1} = \mc{L}_r\Big(m_{v_{r}}^{r} \sum_{w \in B_{v_r}} X_{w}\Big).
  \end{equation}
\end{enumerate}
Each $L_r$ is obtained out of $L = L_0$ by tensor product with a given twister. Let $F_r$ be the set of vertices of $\Gamma$ not in $\Gamma_r$. This is a strictly increasing finite collection of sets. 
\begin{Claim}
  For all $j \geq r$, $L_j$ has zero degree on each component $X_v$ for $v \in F_r$. 
\end{Claim}
\noindent Notice first that each $v \in F_r$ is of the form $v_\ell$ for some $\ell \leq r$. It is thus enough to show that for all $j \geq r$, $L_j$ has degree $0$ on $v_r$. The claim is clear if $r=0$, assume $r \geq 1$. The degree of $L_{r}$ on $v_{r}$ is given by 
\begin{equation}
  \label{eq:Lr}
  \deg\big(L_{r \mid X_{v_{r}}}\big) = \underbrace{\deg\big(L_{r-1 \mid X_{v_{r-1}}}\big)}_{(I)} + \underbrace{\deg\Big(\sO_{\mc{X}}\big(m_{v_{r-1}}^{r-1}\sum_{w \in B_{v_{r-1}}} X_w\big)_{\mid X_{v_{r-1}}}\Big)}_{(II)}.
\end{equation}
By definition $(I)$ is equal to $m_{v_{r-1}}^{r-1}$. The term $(II)$ is equal to 
\begin{displaymath}
  m_{v_{r-1}}^{r-1}\big(\val(v_{r}) -1 \big) - m_{v_{r-1}}^{r-1}\val(v_{r}) = - m_{v_{r-1}}^{r-1}.
\end{displaymath}
The first term of the left hand side is the contribution of neighboring vertices of $v_{r-1}$ while the second one comes from the self-intersection of $X_{v_{r-1}}$. The total sum $(I) + (II)$ is thus $0$ as claimed. Now, for any $j > r$ we have that
\begin{displaymath}
  \deg\big(L_{j \mid X_{v_{r}}}\big) = \underbrace{\deg\big(L_{r \mid X_{v_r}}\big)}_{= 0} + \sum_{r < \ell \leq j} \deg\Big(\sO_{\mc{X}}\big(m_{v_r}^r\sum_{w \in B_{v_r}} X_w\big)_{\mid X_{v_r}}\Big).
\end{displaymath}
If $v_r \notin B_{v_\ell}$ for some $\ell$ then the corresponding term in the right hand sum is clearly $0$. Otherwise we have that
\begin{displaymath}
  \deg\Big(\sO_{\mc{X}}\big(m_{v_r}^r \sum_{w \in B_\ell}X_{w}\big)_{\mid X_{v_r}}\Big) = m_{v_r}^r\val(v_r) - m_{v_r}^r\val(v_r) = 0
\end{displaymath}
where the first right hand term comes from contributions of neighbors of $v_r$, the second from the self intersection of $X_{v_r}$. 

Our claim is proved. We have shown that we have an increasing filtration by $F_r$s that covers $V$ and on which $\mc{L}_{r \mid X}$ has $\underline{0}$ multidegree. There is therefore a line bundle $\mc{L}_\infty$ whose restriction to $X$ has multidegree $\underline{0}$. 
\end{proof}

\begin{rem}
  Each component of the multidegree $(m_v^r)_{v \in V}$ of $L_r$ can be written explicitly in terms of the combinatorics of $\Gamma$. We have already seen during the previous proof that given $v \in F_r$
  \begin{displaymath}
    \forall j \geq r,\quad m_{v}^j = \deg\big(L_{j \mid X_{v}}\big) = 0.
  \end{displaymath}
  In general we have the relation
  \begin{displaymath}
    \deg\big(L_{j\mid X_v}\big) = \deg\big(L_{j-1 \mid X_v}\big) + \deg\Big(\sO_{\mc{X}}\big(m_{v_{j-1}}^{j-1}\sum_{w \in B_{v_j}} X_w\big)_{\mid X_{v}}\Big)
  \end{displaymath}
  which can also be written 
  \begin{displaymath}
    m_{v}^j = m_v^{j-1} + \deg\Big(\sO_{\mc{X}}\big(m_{v_{j-1}}^{j-1}\sum_{w \in B_{v_j}} X_w\big)_{\mid X_{v}}\Big).
  \end{displaymath}
  When $ j < r$, the vertex $v$ doesn't appear in $B_{v_{j-1}}$. The second term of the previous right hand side is $0$ unless $v_j$ is adjacent to $v$, in which case it is $1$. Let $\chi_{v_{j-1}}$ be the function whose value is $0$ on a vertex $v$ if $v_{j-1}$ is adjacent to $v$ and $1$ otherwise. We thus have that
  \begin{displaymath}
    m_v^j = m_v^{j-1} + \chi_{v_{j-1}}(v)m_{v_{j-1}}^{j-1}.
  \end{displaymath}
  An inductive argument thus gives
  \begin{displaymath}
    m_v^j = \sum_{\ell < j} \chi_{v_{\ell}}(v)m_{v_\ell}^{\ell} = \sum_{w \in B_{v}} m_w.
  \end{displaymath}
  In particular one can write $\mc{L}_{r+1}$ as 
  \begin{equation}
    \label{eq:twisterr}
    \mc{L}_{r+1} = \mc{L}_r\Big(\big(\sum_{w \in B_{v_r}} m_w\big)\sum_{w \in B_{v_r}} X_w\Big).
  \end{equation}
\end{rem}

\subsection{Defining a partial extension}

Let $\cMm_{g, n}^{k, \underline{\tau}}$ be the open substack union of the locus of curves of treelike type and the one of $(\underline{\tau}, k)$-balanced ones $\cB_{g, n}^{k, \underline{\tau}}$. This open substack can be described as the complement in $\cMm_{g, n}$ of the following closed locus: let $\cI_{g, n}^{k, \underline{\tau}}$ be the closure in $\cMm_{g, n}$ of curves having topological type a two vertex loopless graph, which is not $(\underline{\tau}, k)$-balanced; then $\cMm_{g, n}^{k, \underline{\tau}} = \cMm_{g, n} - \cI_{g, n}^{k, \underline{\tau}}$. Write $\cJj_{g, n}^{k, \underline{\tau}}$ for the substack of $\cJj_{g, n}$ obtained by pullback along $\cMm_{g, n}^{k, \underline{\tau}} \subset \cMm_{g, n}$. 

Recall that in $\cMm_{g, n+1}$ the divisor $\delta_{h, A}$ is the closure of stable marked curves of topological type a graph having only one edge and two vertices, one of which is of genus $h$ and whose set of legs (marked points) is $A$. The divisor $D_i$ corresponding to the image of the sections of $\pi : \cMm_{g, n+1} \rightarrow \cMm_{g, n}$ is simply $\delta_{0, \{n+1, i\}}$. 
\begin{prop}
  \label{prop:qk}
  Let $\mc{L}(\underline{\tau}, k)$ be the line bundle on $\cMm_{g, n+1}$ given by
  \begin{equation}
    \label{eq:qk}
    \mc{L}(\underline{\tau}, k) = \sO\Big(\sum_{i=1}^n \tau_iD_i\Big)\otimes \omega^{-k}\otimes\sO\Big(\sum_{\substack{0 \leq h \leq \lfloor g/2 \rfloor \\ A \subset \{1, \ldots, n\} \\ 2 \leq |A| + h \leq g+ n-2 }} \big[k(1-2h)+\sum_{i \in A} \tau_i\big]\delta_{h, A\cup\{n+1\}}\Big).
  \end{equation}
 Then $\mc{L}(\underline{\tau}, k)$ defines a section $\bar{\aj}_k : \cMm_{g, n}^{k, \underline{\tau}} \rightarrow \cJj_{g, n}^{k, \underline{\tau}}$ extending $\aj_k$.
\end{prop}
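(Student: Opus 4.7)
The plan is to verify the proposition fiber-wise. Since q-stability is an open condition in families and $\mc{L}(\underline{\tau}, k)$ restricts on the interior $\cM_{g, n}$ to $\sO(\sum_i \tau_i D_i) \otimes \omega^{-k}$ (each $\delta_{h, A\cup\{n+1\}}$ appearing in the twist meets the fibers of $\pi$ only over the boundary of $\cMm_{g, n}$), the line bundle extends $\aj_k$ on the smooth locus. It then suffices to show that for each geometric point $(X, \underline{p}) \in \cMm_{g, n}^{k, \underline{\tau}}$ the restriction $\mc{L}(\underline{\tau}, k)|_X$ is a q-stable line bundle on $X$, and I split the analysis according to whether $(X, \underline{p})$ lies in $\cB_{g, n}^{k, \underline{\tau}}$ or in the remaining treelike stratum.

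The central technical ingredient is a formula for the restriction of $\delta_{h, A\cup\{n+1\}}$ to the fiber $X$ of $\pi$. If $X$ has no separating node of type $(h, A)$, the fiber is set-theoretically disjoint from the divisor and the restriction is trivial. If $X$ does admit a splitting $X = Y \cup Y^c$ with $Y$ of type $(h, A)$, the restriction is extracted from the identity $\pi^*\delta_{h, A} = \delta_{h, A\cup\{n+1\}} + \delta_{h, A}$ (whose left-hand side restricts trivially to fibers of $\pi$) together with the normal bundle formula $N_{\delta/\cMm_{g, n+1}} \cong -\psi_\star^{(1)} - \psi_\star^{(2)}$ for boundary divisors and a local analysis at the bubbled moduli point where the fiber has a nodal structure. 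This produces the multidegree of $\delta_{h, A\cup\{n+1\}}|_X$ on every component of $X$, concentrated on the two components adjacent to the relevant edge of the dual tree.

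On the balanced locus, $\sO_X(\sum_i \tau_i p_i)\otimes \omega_X^{-k}$ is q-stable by definition of $\cB_{g, n}^{k, \underline{\tau}}$ and I claim each twist term vanishes on $X$. Terms indexed by $(h, A)$ not arising as a splitting of $X$ contribute trivially by the restriction formula above. For a splitting $X = Y \cup Y^c$ of type $(h, A)$, the balanced inequalities applied to both $Y$ and $Y^c$ yield $\sum_{i\in A}\tau_i \geq k(2h-1)$ and $\sum_{i\in A}\tau_i \leq k(2h-1)$, so the coefficient $c_{h, A} = k(1-2h) + \sum_{i\in A}\tau_i$ is identically zero. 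Thus $\mc{L}(\underline{\tau}, k)|_X$ coincides with the untwisted, q-stable bundle.

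For a treelike $(X, \underline{p}) \notin \cB_{g, n}^{k, \underline{\tau}}$, the earlier lemma on treelike curves reduces q-stability to having multidegree $\underline{0}$. Summing on each component the untwisted multidegree together with the twist contribution from each edge $e$ of the dual tree of $X$ (using the restriction formula and the coefficient $c_{h_e, A_e}$) and comparing the bookkeeping with the iterative construction of Lemma \ref{lem:twister} shows that the twists compensate the untwisted imbalance exactly, yielding multidegree $\underline{0}$ for $\mc{L}(\underline{\tau}, k)|_X$. The main obstacle in this plan is the precise derivation of the restriction formula used in the second paragraph: the local geometry of $\cMm_{g, n+1}$ at the bubbled moduli points requires a careful normal bundle and clutching analysis, after which the remaining verifications are combinatorial.
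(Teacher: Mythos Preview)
Your overall strategy coincides with the paper's: reduce to a fiberwise check of q-stability, then split into the balanced case (where the coefficients $c_{h,A}$ vanish for every separating node actually present on $X$) and the treelike case (where the twist realizes the multidegree-$\underline{0}$ twister produced by Lemma~\ref{lem:twister}). Your balanced-case argument, squeezing $\sum_{i\in A}\tau_i$ between $k(2h-1)\pm\tfrac12$ and using integrality, is exactly what the paper means by ``is $0$ by assumption.''

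The one genuine difference is in how you compute the restriction of $\sO(\delta_{h,A\cup\{n+1\}})$ to a fiber. You propose to extract it from $\pi^*\delta_{h,A}=\delta_{h,A\cup\{n+1\}}+\delta_{h,A}$ together with normal-bundle and clutching formulas; you flag this as the main obstacle, and it is. The paper sidesteps it entirely: it chooses a regular one-parameter smoothing $f:\mc{X}\to S$ of $X$ and observes that the pullback of $\delta_{h,A\cup\{n+1\}}$ to $\mc{X}$ is the vertical divisor $\Xi_{h,A}=\sum_{v\in\Lambda_{h,A}}X_v$, the sum of the components of the central fiber on the $(h,A)$-side. This identifies the restriction on $X$ \emph{directly} as a twister in the sense of \S\ref{sec:PartialExtensionofAbel-JacobiMaps}, so the comparison with the inductive twister construction of Lemma~\ref{lem:twister} (via formula~\eqref{eq:twisterr}) becomes immediate rather than a separate combinatorial bookkeeping exercise. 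Your route reaches the same multidegree $(\pm 1,\mp 1)$ concentrated at the two ends of the relevant edge, but only after a local analysis that the smoothing approach renders unnecessary. If you adopt the smoothing, the ``main obstacle'' in your plan disappears and the rest of your argument goes through unchanged.
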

\begin{proof}
  We need to check that along fibers of $\pi$ the line bundle $\mc{L}(\underline{\tau}, k)$ is q-stable, i.e. has multidegree $\underline{0}$. Let $(X, \underline{p})$ be a treelike stable $n$-marked curve over a point. Write $\Gamma$ for the dual graph of $X$ and $V$ for its set of vertices. Let $X_v$ be the irreducible component of $X$ corresponding to $v \in V$ and $L_v$ the set of marked points of $X_v$. 

  Choose a regular smoothing $f : \mc{X} \rightarrow S$ of $X$ and let $D_i$ be the corresponding marked point of $f$. If $X$ is a point in $\delta_{h, A} \subset \cMm_{g, n}$ write $\Lambda_{h, A}$ for the subgraph of $\Gamma$ that comes from a specialization of the genus $h$ $A$-marked vertex of the generic point of $\delta_{h, A}$. The pull back of $\delta_{h, A\cup\{n+1\}}$ over $f$ gives the vertical divisor
  \begin{displaymath}
    \Xi_{h, A} = \sum_{v \in \Lambda_{h, A}} X_v.
  \end{displaymath}
The $\delta_{h,A}$ contribution in \ref{eq:qk} is given by
\begin{equation}
  \label{eq:1}
  \big[k(1-2h) + \sum_{i \in A} \tau_i\big]\Xi_{h, A}.
\end{equation}
Now, writing $\hat{g}(v) = g(v) + b_1(v)$ where $g(v)$ is the genus of $v$ (its weight in the dual graph) and $b_1(v)$ is the number of loops $v$ has, we have that
\begin{equation}
\label{eq:2}
  k(1-2h) + \sum_{i \in A} \tau_i = \sum_{v \in \Lambda_{h, A}} \big(k(2-2\hat{g}(v)+\kappa_v) +  \sum_{i \in A\cap L_v} \tau_i\big).
\end{equation}
When $X$ is $(\underline{\tau}, k)$-balanced \ref{eq:2} is $0$ by assumption. Thus $\mc{L}(\underline{\tau}, k)$ is the desired line bundle in the $(\underline{\tau}, k)$-balanced case. When $X$ is treelike notice that each subgraph $\Lambda_{h, A}$ is a branch of $\Gamma$. The vertex of that branch corresponds to the nodal point linking the subcurve of $X$ in $\cMm_{h, A}$ to the one in $\cMm_{g-h, \complement A}$. One can order $\delta_{h, A\cup\{n+1\}}$s in such a way that complements of $\Lambda_{h, A}$s give a decreasing finite cover of $\Gamma$ as in the setting we started with in the proof of lemma \ref{lem:twister}. From \ref{eq:2} and \ref{eq:1} the contribution of $\delta_{h, A\cup\{n+1\}}$ is given by
\begin{displaymath}
  \sO_{\mc{X}}\Big(\Big[\sum_{v \in \Lambda_{h, A}} \big(k(2-2\hat{g}(v)+\kappa_v) +  \sum_{i \in A\cap L_v} \tau_i\big)\Big]\sum_{v \in \Lambda_{h, A}} X_v\Big)
\end{displaymath}
By \ref{eq:twisterr} this is precisely the contribution we need to build up a q-stable line bundle out of $\sO_{\mc{X}}(\sum_{i=1}^n \tau_iD_i)\otimes \omega^{-k}$ by tensoring with twisters.
\end{proof}

\subsection{Obstruction to extend Abel-Jacobi maps yet further}
    Banana curves are obtained by identifying two distinct points on a smooth curves with two others from another curve. Given such a curve $(X, \underline{p})$, there are only two possible q-stable mutlidegrees on $X$ given by $(0, 0)$ and $(1, -1)$, the first coordinate corresponding to the component marked by $p_1$. Given a regular one parameter smoothing $(f : \mc{X} \rightarrow S, \underline{D})$ there is only one twister getting $\sO_{\mc{X}}(\sum_i\tau_iD_i)$ to a q-stable line bundle having either degree $(0,0)$ or $(1, -1)$. The expected multidegree is $(0,0)$ if the multidegree of the line bundle $\sO_X(\sum_i\tau_ip_i)$ is even and $(1,-1)$ if it is odd. Despite this pleasant feature, this construction cannot be globalized to the universal family. The reason is that the corresponding twister would have to be trivial over the locus of irreducible curves and thus only have support over the one of banana curves. This locus is of codimension $2$ in $\cMm_{g, n+1}$ and cannot correspond to a divisor on $\cMm_{g, n+1}$. This strongly suggests that one needs to blow up $\cMm_{g, n}$ along the locus of none $(\underline{\tau}, k)$-stable banana curves to be able to extend $\bar{\aj}_k$ yet further.

\section{Pullbacks along Abel-Jacobi maps}
\label{sec:AFormulaForPullbacks}

\subsection{Pulling back the zero section}
Our goal is to compute the pullbacks of the class of the zero section $[\fZ_{g, n}]$ in $\cJj_{g, n}^{k, \underline{\tau}}$ along $\bar{\aj}_k$s. Recall that
\begin{equation}
  \tag{\ref{eq:Z}}
[\fZ_{g, n}] = \bigg\{\exp\Big(\sum_{s\geq 1} (-1)^s(s-1)!\big\{\phi_*(\ch(\mf{L})\Td^\vee(\Omega_\phi))\big\}_s\Big)\bigg\}_g  
\end{equation}
where $\{\bullet\}_\ell$ is the degree $\ell$ part of $\bullet$. One can picture the involved data by looking at the commutative diagram
\begin{equation}
\label{diag:cartesianneq}
\begin{tikzpicture}[>=latex, baseline=(current  bounding  box.center)]
    \matrix (m) 
    [matrix of math nodes, row sep=2.5em, column sep=2.5em, text
    height=1ex, text depth=0.25ex]  
    { \mf{L} & \cJj_{g, n\mid 1}^{k, \underline{\tau}} & \cMm_{g,n+1}^{k, \underline{\tau}}  & \mc{L}(\underline{\tau}, k)  \\
      & \cJj_{g, n}^{k, \underline{\tau}} & \cMm_{g, n}^{k, \underline{\tau}} & \\}; 
    \path[-, font=\scriptsize]
    (m-1-1) edge (m-1-2)
    (m-1-3) edge (m-1-4);
    \path[->,font=\scriptsize]  
    (m-1-2) edge node[above] {$\epsilon_{n\mid 1}$} (m-1-3)
    (m-1-2) edge node[left] {$\phi$} (m-2-2)
    (m-1-3) edge node[right] {$\pi$} (m-2-3)
    (m-2-2) edge node[above] {$\epsilon_n$} (m-2-3)
    (m-2-3) edge [bend left=30] node[below] {$\bar{\aj}_k$} (m-2-2);
  \end{tikzpicture}
\end{equation}
Notice that we have a tautological lift $\hat{\aj}_k : \cMm_{g, n+1}^{k, \underline{\tau}} \rightarrow \cJj_{g, n\mid 1}^{k, \underline{\tau}}$ simply given by sending an $S$-section $(f: \mc{X} \rightarrow S, \underline{D}, D_{n+1})$ on $(f : \mc{X} \rightarrow S, \underline{D}, D_{n+1}, \mc{L}(\underline{\tau},k))$. The corresponding $S$-object in $\cJj_{g, n}^{k, \underline{\tau}}$ is obtained by simply forgetting the extra section $D_{n+1}$. We can thus complete diagram \ref{diag:cartesianneq} to get 
\begin{equation}
  \label{eq:commdiagC}
\begin{tikzpicture}[>=latex, baseline=(current  bounding  box.center)]
    \matrix (m) 
    [matrix of math nodes, row sep=2.5em, column sep=2.5em, text
    height=1ex, text depth=0.25ex]  
    { \mf{L} & \cJj_{g, n\mid 1}^{k, \underline{\tau}} & \cMm_{g,n+1}^{k, \underline{\tau}}  & \mc{L}(\underline{\tau}, k)  \\
      & \cJj_{g, n}^{k, \underline{\tau}} & \cMm_{g, n}^{k, \underline{\tau}} & \\}; 
    \path[-, font=\scriptsize]
    (m-1-1) edge (m-1-2)
    (m-1-3) edge (m-1-4);
    \path[->, red, font=\scriptsize]
    (m-1-2) edge node[black,left] {$\phi$} (m-2-2)
    (m-1-3) edge node[black, right] {$\pi$} (m-2-3)
    (m-2-3) edge [bend left=30] node[black, below] {$\bar{\aj}_k$} (m-2-2)
    (m-1-3) edge [bend right=30] node[black, above] {$\hat{\aj}_j$} (m-1-2);
    \path[->,font=\scriptsize]  
    (m-1-2) edge node[below] {$\epsilon_{n\mid 1}$} (m-1-3)
    (m-2-2) edge node[above] {$\epsilon_n$} (m-2-3);    
  \end{tikzpicture}
\end{equation}
\begin{lem}
  On the level of Chow groups we have the relation $\bar{\aj}_k^*\phi_* = \pi_*\hat{\aj}_k^*$.
\end{lem}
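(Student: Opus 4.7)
The plan is to deduce the identity from the standard base change formula for Chow theory. The key step is to verify that the square with vertical maps $\pi, \phi$ and horizontal maps $\hat{\aj}_k, \bar{\aj}_k$ is cartesian; unlike diagram (\ref{eq:ncartesian}), where $\hat{\aj}_k, \bar{\aj}_k$ are replaced by the forgetful maps $\epsilon_{n\mid 1}, \epsilon_n$ and the square fails to be cartesian, here the specific choice of $\hat{\aj}_k$ as a tautological lift of $\bar{\aj}_k\circ \pi$ should rescue the situation.

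To verify the cartesian property I will unwind moduli. An $S$-section of the fibered product $\cMm_{g, n}^{k, \underline{\tau}} \times_{\cJj_{g, n}^{k, \underline{\tau}}} \cJj_{g, n\mid 1}^{k, \underline{\tau}}$ consists of an object $(f: \mc{X} \to S, \underline{D})$ of $\cMm_{g, n}^{k, \underline{\tau}}$, an object $(g: \mc{Y} \to S, \underline{E}, E_{n+1}, \mc{M})$ of $\cJj_{g, n \mid 1}^{k, \underline{\tau}}$, and an isomorphism $\bar{\aj}_k(f, \underline{D}) = (f, \underline{D}, \mc{L}(\underline{\tau},k)) \simeq (g, \underline{E}, \mc{M}) = \phi(g, \underline{E}, E_{n+1}, \mc{M})$ in $\cJj_{g,n}^{k,\underline{\tau}}$. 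Recalling from section \ref{sec:CompactifiedUniversalJacobian} the definition of morphisms in $\tJj_{g,n}^d$, such an isomorphism amounts to an identification $(g, \underline{E}) \simeq (f, \underline{D})$ together with an isomorphism $\mc{M} \simeq \mc{L}(\underline{\tau}, k) \otimes g^*\mc{N}$ for some invertible sheaf $\mc{N}$ on $S$. The twist by $\mc{N}$ is part of the 2-isomorphism data defining $\cJj$ and does not contribute extra structure; the only residual datum is the extra section $E_{n+1}$, which is exactly what upgrades $(f, \underline{D})$ to an $S$-object of $\cMm_{g, n+1}^{k, \underline{\tau}}$. This exhibits the canonical equivalence of the fibered product with $\cMm_{g, n+1}^{k, \underline{\tau}}$.

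With cartesianness established, the morphism $\phi$ is proper and flat (a universal curve over a proper stack), while $\bar{\aj}_k$ is a section of the smooth forgetful morphism $\epsilon_n$ between smooth DM stacks, hence a regular embedding admitting a refined Gysin pullback. Fulton's base change formula, in its stack-theoretic incarnation, then yields the desired identity $\bar{\aj}_k^*\phi_* = \pi_*\hat{\aj}_k^*$ on Chow groups.

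The only real subtlety I anticipate is the bookkeeping involved in the tensor-by-pullback equivalence on the Jacobian side when identifying the fibered product with $\cMm_{g, n+1}^{k, \underline{\tau}}$; the remainder is a direct application of classical intersection-theoretic machinery, which is presumably why the author dismisses it as a straightforward check.
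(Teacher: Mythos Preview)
Your proposal is correct and follows essentially the same route as the paper: both first verify that the square with $\phi$, $\pi$, $\hat{\aj}_k$, $\bar{\aj}_k$ is cartesian, and then deduce the identity from compatibility of flat proper pushforward with pullback along a regular embedding. The only cosmetic difference is that you invoke Fulton's base change theorem directly, whereas the paper unwinds it by hand via the projection formula and the flatness of $\phi$; your moduli-theoretic check of cartesianness is in fact more detailed than the paper's, which simply asserts it as straightforward.
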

\begin{proof}
  This is a standard argument. It is straightforward to check that the red part of diagram \ref{eq:commdiagC} is in fact cartesian. Now given an element $\alpha$ in the Chow group of $\cJj_{g, n \mid 1}^{k, \underline{\tau}}$. We have that
\begin{displaymath}
  \bar{\aj}_k^*\phi_*(\alpha) = \epsilon_{n *}\big(\phi_*(\alpha)\cap [\fZ_{g, n}]\big) = \epsilon_{n *}\phi_*\big(\alpha \cap \phi^*[\fZ_{g, n}]\big)
\end{displaymath}
since $\phi$ is flat we get that
\begin{displaymath}
  \bar{\aj}_k^*\phi_*(\alpha) = \pi_*\epsilon_{n \mid 1 *}\big(\alpha\cap [\phi^{-1}\fZ_{g, n}]\big)
\end{displaymath}
which, because the red part of diagram \ref{eq:commdiagC} is cartesian, gives
\begin{displaymath}
  \bar{\aj}_k^*\phi_*(\alpha) = \pi_*\hat{\aj}_k^*(\alpha).
\end{displaymath}
\end{proof}
\begin{thm}
The pullbacks $\bar{\aj}_k^*[\fZ_{g, n}]$ are given over $\cMm_{g, n}^{k, \underline{\tau}}$ by the relation
\begin{equation}
\label{eq:finale}
\bar{\aj}_k^*[\fZ_{g, n}] = \bigg\{\exp\Big(\sum_{s\geq 1} (-1)^s(s-1)!\big\{\pi_*\big(\ch(\mc{L}(\underline{\tau}, k)\big)\Td^\vee(\Omega_\pi))\big\}_s\Big)\bigg\}_g
\end{equation}
where $\{\bullet\}_\ell$ is the degree $\ell$ part of $\bullet$. 
\end{thm}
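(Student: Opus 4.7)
The approach is to pull back formula (\ref{eq:Z}) along $\bar{\aj}_k$ and recognize the right-hand side as (\ref{eq:finale}). Observe first that (\ref{eq:Z}) expresses $[\fZ_{g,n}]$ as a polynomial in the classes $\big\{\phi_{*}(\ch(\mf{L})\Td^{\vee}(\Omega_\phi))\big\}_s$, obtained as a degree-$g$ truncation of an exponential of a sum. Since $\bar{\aj}_k^{*}$ is a graded ring homomorphism on the rational Chow groups, it commutes with sums, products, exponentials and degree truncations. The computation thus reduces to identifying $\bar{\aj}_k^{*}\phi_{*}\bigl(\ch(\mf{L})\Td^{\vee}(\Omega_\phi)\bigr)$.

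For this I would invoke the compatibility $\bar{\aj}_k^{*}\phi_{*}=\pi_{*}\hat{\aj}_k^{*}$ just proved, which rests on the cartesianness of the red square in (\ref{eq:commdiagC}). The remaining task is to compute $\hat{\aj}_k^{*}\ch(\mf{L})$ and $\hat{\aj}_k^{*}\Td^{\vee}(\Omega_\phi)$. Two identifications close the argument. First, $\hat{\aj}_k^{*}\mf{L}=\mc{L}(\underline{\tau},k)$ by construction: the tautological lift $\hat{\aj}_k$ is defined precisely so that the universal torsion-free sheaf $\mf{L}$ pulls back to the line bundle $\mc{L}(\underline{\tau},k)$ engineered in Proposition \ref{prop:qk}. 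Second, since the red square is cartesian and $\Omega_\phi$ is the relative cotangent sheaf along $\phi$, base change yields $\hat{\aj}_k^{*}\Omega_\phi=\Omega_\pi$. Both identifications are functorial with respect to $\ch$ and $\Td^{\vee}$, so
\begin{displaymath}
  \bar{\aj}_k^{*}\phi_{*}\bigl(\ch(\mf{L})\Td^{\vee}(\Omega_\phi)\bigr)\;=\;\pi_{*}\bigl(\ch(\mc{L}(\underline{\tau},k))\Td^{\vee}(\Omega_\pi)\bigr).
\end{displaymath}

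Substituting this single identity into the graded polynomial expression (\ref{eq:Z}) delivers (\ref{eq:finale}) directly. The only step that is not purely formal is the cartesianness of the red square in (\ref{eq:commdiagC}) — concretely, the statement that an $S$-point of $\cJj_{g,n\mid 1}^{k,\underline{\tau}}$ is the same datum as an $S$-point of $\cJj_{g,n}^{k,\underline{\tau}}$ together with a section of the underlying family of stable curves. This is essentially built into the modular descriptions of the stacks in (\ref{eq:ncartesian}) and is the substance of the preceding lemma; once it is in hand, the theorem follows by substitution.
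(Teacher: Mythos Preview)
Your proposal is correct and follows exactly the paper's approach: the paper's proof is the single sentence ``This is a direct consequence of the previous lemma including the fact the red part of \ref{eq:commdiagC} is cartesian,'' and you have simply unpacked that sentence by spelling out the base-change identifications $\hat{\aj}_k^{*}\mf{L}=\mc{L}(\underline{\tau},k)$ and $\hat{\aj}_k^{*}\Omega_\phi=\Omega_\pi$ and the fact that pullback is a graded ring homomorphism.
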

\begin{proof}
  This is a direct consequence of the previous lemma including the fact the red part of \ref{eq:commdiagC} is cartesian.
\end{proof}
\begin{rem}
Formula \ref{eq:finale} is an explicit polynomial in the tautological classes of $\cMm_{g, n}$. Unfortunately, it is not clear how to get back Hain's formula or its extension in \cite{GrushZakzerosection} by directly manipulating \ref{eq:finale}. 
\end{rem}

\subsection{Pullback of the theta divisor to $\cMm_{g, n}$}
We follow notation of \cite[17]{ArbaCorGriff}. Write $\tilde{K}$ for the Chern class of $\omega$ and $\psi_i$ for the Chern class of the cotangent bundle along the marked section $D_i$. The $\psi_i$ classes can be described as pushforwards under $\pi$ by the relation $\psi_i = - \pi_*(D_i^2)$. Using relation \ref{eq:ThetaL} we have that 
\begin{equation}
  \label{eq:PullbackTheta1}
  \bar{\aj}_k^*[\Theta] = -\pi_*\left(\frac{c_1\big(\mc{L}(\underline{\tau}, k)\big)^2}{2}\right)
\end{equation}
Assume that each time a subset $A$ of $\{1, \ldots, n\}$ indexes a sum we have that $2 \leq |A|+h \leq g+n-2$. Unravelling the right hand side of \ref{eq:PullbackTheta1} we have
  \begin{align*}
    \bar{\aj}_k^*[\Theta] & = - \pi_* \Bigg[\quad \; \frac{1}{2}\sum_{i=1}^n \tau_i^2D_i^2 \\
    & \qquad \qquad +  \sum_{\substack{ 0 \leq h \leq \lfloor g/2 \rfloor \\ A\subset \{1, \ldots, n\}}} \sum_{i=1}^n \tau_i\big(\gamma_h + \sum_{j \in A}\tau_j\big)D_i\delta_{h, A \cup\{n+1\}} \\
      & \qquad \qquad + \frac{1}{2}\sum_{\substack{ 0 \leq h, l \leq \lfloor g/2 \rfloor \\ A, B \subset \{1, \ldots, n\}}} \big(\gamma_h+\sum_{j \in A}\tau_j)\big(\gamma_h+\sum_{j \in B}\tau_j\big)\delta_{h, A\cup\{n+1\}} \delta_{l, B\cup\{n+1\}} \\
    & \qquad \qquad + \frac{k^2\tilde{K}^2}{2} - k\sum_{i=1}^n \tau_iD_i\tilde{K} - k \sum_{\substack{0 \leq h \leq \lfloor g/2 \rfloor \\ A \subset \{1, \ldots, n\}}} \Big(\gamma_h+ \sum_{i \in A}\tau_i\Big) \tilde{K}\delta_{h, A \cup \{n+1\}} \Bigg].
  \end{align*}
where $\gamma_h$ stands for $k(1-2h)$. Taking into account that 
  \begin{itemize}
  \item[\textbullet]
    $\tilde{K}D_i = - D_i^2$
  \item[\textbullet]
    $\pi_*(D_i\delta_{h, A\cup\{n+1\}})$ is zero unless $i\in A$, in which case it is $\delta_{h, A}$
  \item[\textbullet]
    $\pi_*(\delta_{h, A\cup\{n+1\}}\delta_{l, B\cup\{n+1\}})$ is zero unless $A = B$ and $h = l$ in which case it is equal to $-\delta_{h, A}$
  \item[\textbullet]
    $\pi_*(\tilde{K}\delta_{h, A\cup\{n+1\}}) = (2h-1)\delta_{h, A}$
  \end{itemize}
we get the expression
\begin{equation}
  \label{eq:ThetaLfinale}
  \bar{\aj}_k^*[\Theta] = \sum_{i=1}^n \left(\frac{\tau_i^2}{2} + k\tau_i\right)\psi_i - \frac{k^2}{2}\tilde{\kappa}_1 - \frac{1}{2} \sum_{\substack{0 \leq h \leq \lfloor g/2 \rfloor \\ A \subset \{1, \ldots, n\}}} \left(k(1-2h) + \sum_{i \in A} \tau_i\right)^2\delta_{h, A}
\end{equation}
where for each $A$ we assume $2 \leq |A| + h \leq g+n -2$. For $k=0$ rearranging terms we recover the relation \ref{eq:Theta2}. More generally, we recover back Hain's formula. 

\subsection{Pulling back the theta divisor in degree $g-1$}
\label{subsec:Thetag}
We briefly show how to adapt the present strategy to compute the pullback of the theta divisor on the compactification $\cJj_{g, n}^{g-1}$ of the degree $g-1$ universal jacobian with canonical trivial polarization $\sO$. A point of $\cJj_{g, n}^{g-1}$ is a tuple $(X, \underline{p}, L)$ where $(X, \underline{p})$ is a stable $n$-marked curve and $L$ is a q-stable torsion-free rank $1$ sheaf on $X$. The compactification $\cJj_{g, n}^{g-1}$ has a well defined theta divisor given by 
\begin{equation}
  \label{eq:descriptiontheta}
  \Theta_{g-1} = \{(X, L) \mid h^0(L) \geq 1\}. 
\end{equation}
See for instance \cite{MR2105707} or \cite{MR2557139}. In particular one can write down the class of the theta divisor as
\begin{equation}
  \label{eq:CThetag}
  -[\Theta_{g-1}] = \phi_*\left(\frac{c_1(\mf{L}_{g-1})^2}{2}\right) - \phi_*\left(\frac{c_1(\mf{L}_{g-1})c_1(\omega)}{2}\right)+ \lambda_1
\end{equation}
where $\phi : \cJj_{g, n\mid 1}^{g-1} \rightarrow \cJj_{g, n}^{g-1}$ is the universal curve and $\mf{L}_{g-1}$ the universal torsion-free sheaf of rank $1$ over $\cJj_{g, n \mid 1}^{g-1}$. Given an $n$-tupe of integers $\underline{\tau} = (\tau_1, \ldots, \tau_n)$ such that $\sum_i\tau_i=g-1$ we get on $\cM_{g, n}$ a section $\aj^{g-1} : \cM_{g, n} \rightarrow \cJ_{g, n}^{g-1}$ sending points $(X, \underline{p})$ on $(X, \underline{p}, \sO_X(\sum_i\tau_ip_i))$. To pullback the theta divisor on $\cJj_{g, n}^{g-1}$ it is enough to extend $\aj^{g-1}$ to codimension $1$ loci given by stable curves having $1$ node. The Abel-Jacobi map $\aj^{g-1}$ extends as it is to the case of irreducible curves. For a curve of compact type $(X, \underline{p})$ notice that a q-stable line bundle $L$ is of multidegree 
\begin{displaymath}
\deg_{X_v} L = g(v) - \chi_{p_1^c}(v)
\end{displaymath}
where $\chi_{p_1^c}(v)$ is $0$ if $p_1 \notin X_v$ and $1$ otherwise. The multidegree of $L$ does thus only depend on $(X, \underline{p})$. Twisting the line bundle $\sO(\sum_i\tau_i D_i)$ over the locus $\cMm_{g, n}^{\leq 1}$ of stable curves having at most one node we get that 
\begin{displaymath}
  \sO(\sum_i\tau_iD_i)\otimes \sO\Big(\sum_{\substack{0 \leq h \leq \lfloor g/2 \rfloor \\ A \subset \{1, \ldots, n\} \\ 2 \leq |A| \leq g+n-2}}\big(\sum_{i \in A} \tau_i - h + \chi_{A^c}(D_1)\big)\delta_{h, A\cup\{n+1\}}\Big)
\end{displaymath}
is a q-stable line bundle. It does thus define an extension $\bar{\aj}^{g-1}$ to $\cMm_{g, n}^{\leq 1}$. Pulling formula \ref{eq:CThetag} along $\bar{\aj}^{g-1}$ and using the same relations on tautological classes as the previous section we get that
\begin{equation}
  \label{eq:Thetagfinal}
  \bar{\aj}^{g-1 *}[\Theta_{g-1}]  =  \sum_{i=1}^n\frac{\tau_i(\tau_i+1)}{2}\psi_i 
                                     -\lambda _1 - \sum_{\substack{0 \leq h \leq \lfloor g/2 \rfloor\\ A \subset \{1, \ldots, n\}}} \Big(\sum_{i \in A}\tau_i - h\Big)\Big(\sum_{i \in A}\tau_i - h +1\Big)\frac{\delta_{h, A}}{2}
\end{equation}
where $2 \leq |A|+h \leq g+n-2$. Notice that this class differs from the one computed by Grushevsky and Zakharaov in \cite{GrushZakDRC} by $\delta_{irr}/8$. The reason is that the theta divisor here is not given by the vanishing of a theta function but using description \ref{eq:descriptiontheta}. As is the case in \cite{GrushZakDRC}, formula \ref{eq:Thetagfinal} generalizes a formula given by M\"uller in \cite{MR3092284}. We shortly recall their arguments. The locus $\overline{\mc{D}}_g$ computed by M\"uller is the closure in $\cMm_{g, n}$ of 
\begin{displaymath}
  \mc{D}_g = \big\{ (X, \underline{p}) \mid h^0\big(\sO_X(\sum_i \tau_ip_i)\big) \geq 1\big\}
\end{displaymath}
for smooth $X$ and at least one negative $\tau_i$. The locus $\mc{D}_g$ is given by the pullback $(\aj^{g-1})^*\Theta_{g-1}$ on $\cM_{g, n}$. Over the boundary locus $\cM_{h, A, x}\times \cM_{g-h, A^c, y}$ the Abel-Jacobi map $\bar{a}^{g-1}$ sends a curve $X = X_h\cup X_{g-h}$ on the line bundle
\begin{displaymath}
  \Big(\sO_{X_h}\big(\sum_{i \in A}\tau_iD_i - \big(\sum_{i\in A}\tau_i - h + 1\big)x\big), \sO_{X_{g-h}}\big(\sum_{i \in A^c}\tau_iD_i + \big(\sum_{i\in A}\tau_i -h)y\big)\Big).
\end{displaymath}
If any of the entries of the first factor is negative the image of $\cM_{h, A, x}\times \cM_{g-h, A^c, y}$ by $\bar{a}^{g-1}$ doesn't lie in the support of $\Theta_{g-1}$. The pullback of $\Theta_{g-1}$ over this locus is in a codimension $2$ locus of $\cMm_{g, n}$. The divisors $\bar{a}^{g-1 *}[\Theta_{g-1}]$ and $[\overline{\mc{D}}_g]$ are thus equal on each such locus. When all entries are positive $\bar{a}^{g-1}$ sends $\cM_{h, A, x} \times \cM_{g-h, A^c, y}$ into the support of $\Theta_{g-1}$. To get $[\overline{\mathcal{D}}_g]$ one needs to correct $\bar{a}^{g-1}[\Theta_{g-1}]$ by substracting the multiplicity of $\bar{a}^{g-1}$ along the previous locus. By the Riemann singularity theorem we get
\begin{displaymath}
  [\overline{\mc{D}}_g] = \bar{a}^{g-1 *}[\Theta_{g-1}] - \sum_{(h, A) \in E_+} \big(h- \sum_{i \in A} \tau_i\big)\delta_{h, A} 
\end{displaymath}  
where $E_+$ is the subset of couples $(h, A)$ having the usual constraints as in \ref{eq:Thetagfinal}, for each $i \in A$, $\tau_i$ is positive and $h \geq \sum_{i \in A}\tau_i$.

\bibliographystyle{alpha}

\end{document}